\documentclass[12pt]{amsart}

\usepackage{latexsym,amsmath,amssymb,amsthm,amsfonts}

\usepackage[shortalphabetic]{amsrefs}

\usepackage{hyperref}

\numberwithin{equation}{section}

\usepackage[capitalise]{cleveref}

\usepackage{multido,pst-plot,pstricks,pst-node}
\textheight=23cm \textwidth=17cm \hoffset=-2cm \voffset=-1cm

\linespread{1.48}

\newcommand{\R}{{\mathbb{R}}}
\newcommand{\eps}{\varepsilon}

\renewcommand{\P}{{\Pi}}

\newcommand{\T}{{\mathcal{T}}}
\renewcommand{\S}{{\mathbb{S}}}
\newcommand{\x}{{\boldsymbol{x}}}

\newcommand{\bxi}{{\boldsymbol{\xi}}}

\renewcommand{\u}{{\boldsymbol{u}}}

\newcommand{\w}{{\boldsymbol{w}}}

\newcommand{\y}{{\boldsymbol{y}}}

\renewcommand{\b}{{\boldsymbol{b}}}
\newcommand{\z}{{\boldsymbol{z}}}

\newcommand{\wt}{\widetilde}

\newcommand{\bg}{{\boldsymbol{g}}}
\newcommand{\bu}{{\boldsymbol{u}}}
\newcommand{\bv}{{\boldsymbol{v}}}
\newcommand{\bde}{{\boldsymbol{e}}}

\newcommand{\f}{\frac}
\newcommand{\Og}{\Omega}
\newcommand{\Bl}{\Bigl}
\newcommand{\Br}{\Bigr}
 \def\sph{\mathbb{S}^{d-1}}
 \def\ta{\theta}
 \def\og{\omega}

\newcommand{\bta}{{\boldsymbol{\eta}}}
\def\bog {\boldsymbol{\omega} }
\renewcommand{\phi}{{\boldsymbol{\varphi}}}
\def\al{\alpha}
\def\b{\beta}
\def\Ld{\Lambda}
\def\da{\delta}
\def\p{\partial}
\def\0{\boldsymbol{0}}
\def\SS{\mathbb{S}}
\def\u{\bu}
\def\bN{\boldsymbol N}
\def\w{\boldsymbol w}
\def\RR{\mathbb{R}}
\def\ld{\lambda}

\newcommand{\qtq}[1]{{\quad\text{#1}\quad}}

\newtheorem{theorem}{Theorem}[section]
\newtheorem{lemma}[theorem]{Lemma}
\newtheorem{proposition}[theorem]{Proposition}

\theoremstyle{remark}
\newtheorem{remark}[theorem]{Remark}


\title{Optimal polynomial meshes exist on any multivariate convex domain}

\author{Feng Dai}
\address{Department of Mathematical and Statistical Sciences,
	University of Alberta, Edmonton, Alberta T6G 2G1, Canada}
\email{fdai@ualberta.ca}

\author{Andriy Prymak}
\address{Department of Mathematics, University of Manitoba, Winnipeg, MB, R3T 2N2, Canada}
\email{prymak@gmail.com}

\thanks{	The first author was supported by  NSERC of Canada Discovery
	grant RGPIN-2020-03909, and the second author  was supported by NSERC of Canada Discovery grant RGPIN-2020-05357.
}

\keywords{optimal meshes, algebraic polynomials, boundary effect}

\subjclass[2010]{41A17, 41A63}

\begin{document}

\begin{abstract}
We show that optimal polynomial meshes exist for every convex body  in $\R^d$,  confirming a conjecture by A.~Kro\'{o}.
\end{abstract}

\maketitle

\section{Introduction}\label{sec:intr}

  For  a  compact set $E\subset \R^d$ and a continuous function $f$ on $E$, we define $\|f\|_{E}:=\max_{\x\in E}|f(\x)|$.  We denote by $\Pi_{n}^d$ the space of all real algebraic polynomials in $d$ variables of total degree at most $n$.  A compact domain    $\Omega$   in $\R^d$ is said to   possess {\it optimal polynomial meshes} if  there exists a sequence $\{Y_n\}_{n\ge1}$ of finite subsets of $\Omega$ such that the cardinality of $Y_n$ is at most $C_1 n^d$ while
\begin{equation}\label{eqn:infty norm discr}
	\|P\|_{\Omega}\le C_2 \|P\|_{Y_n} \qtq{for any} P\in\P_{n}^d,
\end{equation}
where $C_1$ and  $C_2$ are positive   constants depending only on $\Omega$.  Note that the dimension of the space $\P_{n}^d$ is of the order $n^d$ as $n\to\infty$, which is the reason for calling such sets {\it optimal} meshes. Results on existence of optimal meshes can be viewed as Marcienkiewicz-Zygmund inequalities for discretization of $L_\infty$ norm, see~\cite{DPTT}. Optimal meshes (being partial case of the so-called \emph{norming sets}, i.e. sets satisfying~\eqref{eqn:infty norm discr}) have applications for discrete least squares approximation, cubature formulas, scattered data interpolation, study of discrete Fekete and Leja type sets, see for example~\cite{ap1,ap2,ap3}.

Existence of optimal polynomial meshes was previously established for various classes of domains in $\R^d$, including convex polytopes in~\cite{Kr11},  $C^\alpha$ star-like domains with $\alpha>2-\frac2d$ in~\cite{Kr13}, and  certain extension of $C^2$ domains in~\cite{Pi}.  It was conjectured by Kro\'{o}~\cite{Kr11} that each  convex compact set with nonempty interior  possesses optimal polynomial meshes.  In~\cite{Kr19} Kro\'{o} settled this conjecture for $d=2$ proving existence of optimal polynomial meshes for arbitrary planar convex domains using certain tangential Bernstein inequality.  The  second author of the current paper gave an alternative proof  of this  conjecture for $d=2$,    using a connection between the Christoffel functions, positive quadrature formulas and polynomial meshes established in the  paper~\cite{Bo-Vi} of  Bos and Vianello. We also mention that for any compact set $\Omega$ in $\R^d$ existence of \emph{nearly} optimal meshes, i.e. those  satisfying~\eqref{eqn:infty norm discr} with cardinality of $Y_n$ at most $C (n\log n)^d$, was established in~\cite{BBCL}*{Prop.~23} using Fekete points which are extremely hard to find explicitly.


The main goal of this paper is to prove  the following theorem, which confirms the   conjecture of Kro\'{o}  for $d\ge 3$.
\begin{theorem}\label{thm:optimal mesh}
	There exists a constant $C$ depending only on $d$ such that for every positive integer $n$ and  every convex body $\Og\subset \R^d$, there exist   $\x_1,\dots, \x_N\in\Og$ with  $N\leq C n^d$   such that
	\begin{equation}\label{1-2:eq}\|Q\|_\Og \leq 2 \max_{1\leq j\leq N}|Q(\x_j)|\  \ \text{ for every $Q\in \Pi_n^d$.}\end{equation}
\end{theorem}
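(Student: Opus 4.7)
My plan is to combine a direction-dependent Bernstein inequality for convex bodies with an anisotropic covering of $\Og$ whose cardinality is controlled by a Steiner-type volume count. The mesh $Y_n$ will consist of one point per cell; since the $|Q|$-maximizer lies in some cell, control of the oscillation of $Q$ inside each cell will deliver \eqref{1-2:eq}.

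The analytic starting point is the chord form of Bernstein's inequality. Restricting $P\in\P_n^d$ to a chord through $\x\in\Og$ in direction $\u\in\sph$, rescaling to $[-1,1]$, and applying the univariate Bernstein inequality yields
$$|\partial_\u P(\x)| \le \frac{n}{\sqrt{t_+(\x,\u)\, t_-(\x,\u)}}\,\|P\|_\Og,$$
where $t_\pm(\x,\u)$ are the distances from $\x$ to $\p\Og$ along $\pm\u$. This chord bound is insufficient near strictly curved boundary portions; for a ball it overestimates tangential derivatives at points within distance $t$ of $\p\Og$ by a factor $1/\sqrt{t}$. I therefore upgrade it: for each $\x$ near $\p\Og$ I select a direction $\n_\x\in\sph$ playing the role of an outward normal and establish a tangential-normal refinement
$$|\partial_\u P(\x)| \le C_d\, n\left(\frac{|\u\cdot\n_\x|}{\sqrt{t(\x)}} + \sqrt{1-(\u\cdot\n_\x)^2}\right)\|P\|_\Og,\qquad t(\x):=\dist(\x,\p\Og).$$
After normalizing by John's theorem so that $B_1\subseteq\Og\subseteq d\,B_1$, I extract $\n_\x$ from a pair of parallel supporting hyperplanes of $\Og$ at separation $\asymp t(\x)$: the chord estimate along $\n_\x$ gives the $1/\sqrt{t(\x)}$ term, while for $\u\in\n_\x^\perp$ the inclusion $B_1\subseteq\Og$ forces $t_\pm(\x,\u)\gtrsim 1$, recovering the clean factor $n$.

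Given this tangential-normal inequality, I stratify $\Og=\bigcup_k\Og_k$ with $\Og_k:=\{\x : 2^{-k-1}<t(\x)\le 2^{-k}\}$, $k=0,\dots,\lceil 2\log_2 n\rceil$, plus a residual near-boundary layer of width $\lesssim 1/n^2$ handled separately by the Markov form of the chord bound. In each $\Og_k$ I tile by anisotropic cells of thickness $c\cdot 2^{-k/2}/n$ along $\n_\x$ and side $c/n$ in the $(d-1)$ directions of $\n_\x^\perp$, for a small universal constant $c$. Integrating the tangential-normal inequality along the segment from a cell center $\x_j$ to any $\y$ in the cell keeps $|P(\y)-P(\x_j)|\le\tfrac12\|P\|_\Og$, which gives \eqref{1-2:eq}. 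The cardinality estimate rests on the Steiner-type bound $\Vol(\Og_k)\lesssim 2^{-k}$ (uniform for convex bodies in $d\,B_1$): each cell in $\Og_k$ has volume $\asymp 2^{-k/2}/n^d$, so stratum $k$ contributes $\lesssim 2^{-k/2}\cdot n^d$ cells, and the geometric series gives $N=O(n^d)$; the residual layer adds only $O(n^{d-1})$.

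The main obstacle is the uniform validity of the tangential-normal Bernstein inequality across \emph{all} convex bodies, with constants depending only on $d$. At non-smooth boundary points---edges, vertices, or places where flat and curved pieces of $\p\Og$ meet---there is no intrinsic normal direction, and the anisotropic cells must still tile $\Og_k$ without gaps. The remedy is never to define $\n_\x$ intrinsically from $\p\Og$, but to pick it from a pair of supporting hyperplanes at scale $t(\x)$; convexity of $\Og$ then supplies the required uniform lower bounds on $t_\pm(\x,\u)$ in directions transverse to $\n_\x$. Making the choice $\x\mapsto \n_\x$ piecewise constant over a finite $\eps$-net on $\sph$ lets one assemble the global cover from $O_d(1)$ affine families of cells while preserving both the Bernstein smallness condition and the $O(n^d)$ count. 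This geometric bookkeeping is where the real work lies.
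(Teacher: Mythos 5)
Your overall architecture (John normalization, a direction-dependent Bernstein inequality, an anisotropic covering whose cardinality is controlled by a volume count) has the right shape, but the proposal has a genuine gap at its analytic core: the tangential--normal inequality
\[
|\partial_\u P(\x)| \le C_d\, n\Bigl(\tfrac{|\u\cdot\n_\x|}{\sqrt{t(\x)}} + \sqrt{1-(\u\cdot\n_\x)^2}\Bigr)\|P\|_\Og
\]
is neither proved by your argument nor true in the generality in which you use it. Your justification of the tangential term is that for $\u\perp\n_\x$ convexity and $B(\0,1)\subset\Og$ force $t_\pm(\x,\u)\gtrsim 1$. This is false already for $\Og=B(\0,1)$ itself: at $\x=(1-t)\e_1$ the chord in direction $\e_2$ has half-length $\sqrt{2t-t^2}\asymp\sqrt{t}$, so the chord estimate yields only $n/\sqrt{t}$ in the tangential direction --- exactly the loss you set out to remove. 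The clean tangential bound for the ball is true, but it requires a different mechanism (in the paper: restricting $P$ to inscribed parabolas $y=\delta+kx^2$ inside two-dimensional sections and applying univariate Bernstein along them); it does not follow from chord bounds.

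Moreover, with a single normal direction $\n_\x$ the inequality is simply false near edges and corners. Take $d=2$, $\Og=[0,1]^2$ (after John normalization), $\x=(s,s)$ near the corner, and $P(x,y)=T_n(2x-1)$: for the natural choice of $\n_\x$ normal to the bottom edge, $\e_1\perp\n_\x$, yet $|\partial_{\e_1}P(\x)|=2|T_n'(2s-1)|\asymp n^2$. At such points the polynomial oscillates at scale $\sqrt{t_i}/n$ in \emph{several} independent directions, so cells that are thin in only one direction cannot keep the oscillation below $\frac12\|P\|_\Og$; your tiling of $\Og_k$ breaks down precisely on the lower-dimensional boundary strata that you relegate to ``geometric bookkeeping.'' This is not bookkeeping but the heart of the problem. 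The paper's Dubiner-type metric $\rho(\x,\y)=\max_{\bxi}\bigl|\sqrt{\x\cdot\bxi-a_\bxi}-\sqrt{\y\cdot\bxi-a_\bxi}\bigr|$ encodes the correct anisotropy in all directions simultaneously; the Lipschitz bound $|Q(\x)-Q(\y)|\le C_* n\rho(\x,\y)$ (Theorem \ref{thm:norm control}) is the hard substitute for your inequality; and the cardinality of a maximal $\rho$-separated set is controlled not by an explicit tiling but via the doubling property of $\rho$-balls combined with fast-decreasing polynomials. To repair your approach you would need to prove a multi-directional version of your inequality --- which is essentially Theorem \ref{thm:norm control} --- at which point the construction reduces to the paper's.
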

\noindent It is worthwhile to point out that the constant $C$ in this  theorem  does not depend  on the particular geometry of the convex body $\Omega$.

We rely on certain ideas from the paper~\cite{Du} of Dubiner, but the proofs here will be self-contained and independent of~\cite{Du}. One of the key concepts in~\cite{Du} is a metric on a convex body $\Omega\subset\R^d$ defined as follows. For any $\x,\y\in\Omega$, define
\begin{equation}\label{eqn:def-dubiner}
	\rho(\x,\y):=\rho_\Omega(\x,\y):=\max_{\bxi\in\S^{d-1}}
	\left|\sqrt{\x\cdot\bxi-a_\bxi}-\sqrt{\y\cdot\bxi-a_\bxi}\right|,
\end{equation}
where $a_\bxi:=\min_{\z\in\Omega} \z\cdot \bxi$, and $\S^{d-1}$ denotes the unit sphere of $\R^d$. It is straightforward to verify that $\rho$ is a metric on $\Omega$. The desired optimal mesh will be constructed as a set of points satisfying certain separation and covering conditions with respect to (w.r.t.) the  metric $\rho$.


In the next section, we briefly explain some ideas behind
the proof of Theorem \ref{thm:optimal mesh}, describe  several intermediate  results that are  required in the proof and may be of independent interest, as well as give the structure of the remainder of this paper. 

\section{Outline of the proof}\label{sec:2}

The proof of Theorem \ref{thm:optimal mesh} is long and may appear quite technical, but, in fact, it follows certain rather geometric ideas which will be described below. 

We use $\|\cdot\|$ to denote the Euclidean norm of a vector in $\R^d$, $\0$ for the origin in $\R^d$, and denote by $B(\x, r):=B(\x, r)_{\R^{d}}:=\{\z\in \R^d:\  \ \|\z-\x\|\leq r\}$ the closed  Euclidean ball with center $\x\in\R^d$ and radius $r>0$. We use $\lambda_d$ to denote the Lebesgue measure on $\R^d$.

We start with a few useful observations. 
Due to John's theorem on inscribed ellipsoid of the largest volume\footnote{There appears to be no consistency in the literature for the names of the maximal volume inscribed ellipsoid and the minimal volume circumscribed ellipsoid; either one may be referred to as John's or L\"owner-John's or L\"owner's ellipsoid. According to Busemann, L\"owner discovered the uniqueness of the minimal volume ellipsoid, but this was never published. John established a characterization for these ellipsoids which implied uniqueness and other properties. An interested reader is referred to the survey~\cite{He}.}~\cite{Sc}*{Th.~10.12.2, p.~588}, for any convex body $\Omega\subset\R^d$,  there exists a nonsingular  affine transform $\T:\R^d\to\R^d$ such that $
 B(\0,1)\subset \T \Omega\subset B(\0,d)$.  Since optimal meshes are invariant under affine transforms of the domain, without loss of generality we may assume that \begin{equation}\label{2-1}
 B(\0,1)\subset \Omega\subset B(\0,d),
 \end{equation} which will  allow us to achieve that all involved implicit  constants depend on $d$ only, but not on the particular geometry of the boundary of $\Omega$.

 While optimal meshes are invariant  under affine transforms of the domain,   the Dubiner distance $\rho_{\Og}$ is clearly  not.  However, it can be shown (see~\eqref{eqn:metric after affine transform}) that the metric $\rho_\Og$ is equivalent (up to some constants) under affine transforms.  Let $\rho=\rho_{\Og}$ denote the metric given in \eqref{eqn:def-dubiner}.  For $\x\in\Og$ and $r>0$, we define  \begin{equation}\label{2-7}
B_\rho(\x,r):=\{\y\in\Omega: \rho(\x,\y)\le r\}.
\end{equation}
Given a number $\epsilon>0$, we say a finite subset  $\Ld$ of $ \Og$ is    $\epsilon$-separated w.r.t.  the metric $\rho$   if $\rho (\og, \og')\ge \epsilon$ for any two distinct $\og, \og'\in\Ld$. An $\epsilon$-separated subset $\Ld$ of $\Og$ is called maximal if
$\Og=\bigcup_{\og\in\Ld} B_\rho (\og, \epsilon).$ 
We remark that for any $\epsilon>0$, there exists a finite maximal $\epsilon$-separated subset of $\Omega$, moreover, this subset can be obtained in a constructive manner. Namely, choose arbitrary $\x_1\in\Omega$ and proceed recursively. If $\{\x_1,\dots,\x_n\}$ is an $\epsilon$-separated subset of $\Omega$ which is not maximal, then define $\x_{n+1}$ to be any point from the nonempty set $\Omega\setminus \left(\bigcup_{j=1}^n B_\rho (\x_j, \epsilon)\right)$ in which case $\{\x_1,\dots,\x_{n+1}\}$ is a larger $\epsilon$-separated set. It remains to show that this process must terminate. Indeed, otherwise, since $\Omega$ is a compact set (w.r.t. the Euclidean metric), there exists a convergent subsequence $\{\x_{n_k}\}_{k=1}^\infty$. Using an elementary inequality $|\sqrt{t}-\sqrt{s}|\le\sqrt{|t-s|}$, $t,s\ge 0$, we see that $\rho(\x,\y)\le \sqrt{\|\x-\y\|}$ for any $\x,\y\in\Omega$. This gives a contradiction by $\epsilon \le \rho(\x_{n_k},\x_{n_{k+1}})\le \sqrt{\|\x_{n_k}-\x_{n_{k+1}}\|}\to0$, $k\to\infty$.


Now let $Y_n\subset\Omega$ be a maximal $c_d/n$-separated set w.r.t. the metric $\rho$, where $c_d\in (0, 1)$ is a  small constant depending only on $d$.  The proof of Theorem \ref{thm:optimal mesh} consists of two key  components:   \textbf{i)} show that the estimate ~\eqref{1-2:eq} holds provided that the constant $c_d$ is small enough;  and \textbf{ii)} estimate   the cardinality of $Y_n$.

For the component \textbf{i)},  we obtain  the estimate \eqref{1-2:eq} as   an immediate  consequence of the following theorem establishing a pointwise bound on how the polynomials can vary in terms of the distance $\rho$. 
\begin{theorem}\label{thm:norm control} Let $\Omega\subset\R^d$ be a convex body satisfying~\eqref{2-1}.
There exists a positive  constant  $C_\ast$ depending only on $d$  such that  for any $Q\in\P_{n}^d$ with $\|Q\|_{\Omega}\le1$,  we have
	\begin{equation}\label{eqn:pxy control}
		|Q(\x)-Q(\y)|\le  C_\ast n\rho(\x,\y) \qtq{whenever}  \x,\y\in\Omega.
	\end{equation}
\end{theorem}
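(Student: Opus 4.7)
The natural approach is to reduce to one dimension by restricting $Q$ to the line through $\x$ and $\y$, applying the classical univariate Bernstein inequality, and then matching the resulting bound to the Dubiner metric~$\rho$. Assuming $\x\ne\y$, let $\bog=(\y-\x)/\|\y-\x\|$ and let $[\a,\b]=\Omega\cap(\x+\R\bog)$ be the chord of $\Omega$ through these two points, oriented so that $\a,\x,\y,\b$ appear in this order. Setting $q(t):=Q(\a+t\bog)$ for $t\in[0,L]$ with $L=\|\b-\a\|$, we have $q\in\Pi_n^1$ and $\|q\|_{[0,L]}\le 1$, so the classical one-dimensional Bernstein inequality $|q'(t)|\le n/\sqrt{t(L-t)}$ yields
\begin{equation*}
|Q(\x)-Q(\y)|=|q(s_x)-q(s_y)|\le n\Bigl|\int_{s_x}^{s_y}\frac{dt}{\sqrt{t(L-t)}}\Bigr|,
\end{equation*}
where $s_x=\|\x-\a\|$ and $s_y=\|\y-\a\|$. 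The substitution $t=L\sin^2\theta$ evaluates the integral, and an elementary estimate bounds it above by
\begin{equation*}
\frac{C}{\sqrt{L}}\bigl(|\sqrt{s_x}-\sqrt{s_y}|+|\sqrt{L-s_x}-\sqrt{L-s_y}|\bigr).
\end{equation*}

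It remains to bound each of these two square-root differences (divided by $\sqrt{L}$) by $C\rho(\x,\y)$. To invoke the definition of $\rho$ one specializes the max by choosing $\bxi\in\S^{d-1}$ in the inward normal cone at $\a$, so that $a_\bxi=\a\cdot\bxi$; a direct computation then gives
\begin{equation*}
\rho(\x,\y)\ge\sqrt{\bog\cdot\bxi}\,|\sqrt{s_x}-\sqrt{s_y}|,
\end{equation*}
and symmetrically at $\b$. This produces the desired bound provided one can choose $\bxi$ in the appropriate normal cone with $\bog\cdot\bxi\ge c_d/L$ for a constant $c_d$ depending only on $d$. The normalization $B(\0,1)\subset\Omega$ should play the decisive role here: the inscribed unit ball forces the tangent cone at any boundary point to be suitably wide and limits how tangential a chord from $\a$ into $\Omega$ can be relative to its length.

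The main obstacle will be exactly this geometric step. In the regime where the chord $[\a,\b]$ is short and nearly tangent to $\p\Omega$ (for instance when $\x,\y$ are both close to a smooth boundary point), the inward normals at $\a,\b$ become almost orthogonal to~$\bog$, and the naive 1D Bernstein bound along $[\a,\b]$ overshoots the true variation of~$Q$. Closing this gap will likely require either replacing the segment $[\x,\y]$ by a curved path that dips into $\Omega$ so as to use a longer, better-oriented chord through an interior point (exploiting $B(\0,1)\subset\Omega$), or a sharper pointwise Bernstein-type bound on $|\nabla Q\cdot\bog|$ which uses the $L^\infty$ control of~$Q$ on all of $\Omega$, not merely on a single chord. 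Either route will draw on auxiliary geometric lemmas of the kind foreshadowed in Section~\ref{sec:2}, with the two-sided inclusion~\eqref{2-1} being essential to keep all constants dependent only on the dimension~$d$.
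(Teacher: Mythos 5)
Your reduction to the chord $[\a,\b]$ and the integrated Bernstein bound are fine as far as they go, but the geometric step you flag as "the main obstacle" is not merely delicate --- it is false, so the proposal does not prove the theorem. Take $\Omega=B(\0,1)$ (which satisfies~\eqref{2-1} up to the harmless dilation) and a short chord near the boundary: the normal cone at the endpoint $\a$ is the single ray $\bxi=-\a$, and a chord of length $L$ issuing from $\a$ in direction $\bog$ satisfies $\bog\cdot\bxi=L/2$, not $\bog\cdot\bxi\ge c_d/L$. Consequently your chord estimate controls $|\sqrt{s_x}-\sqrt{s_y}|/\sqrt{L}$ while the metric only controls $\sqrt{L}\,|\sqrt{s_x}-\sqrt{s_y}|$, an overshoot by the unbounded factor $1/L$. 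The inscribed unit ball cannot rescue this: it bounds the curvature of $\p\Omega$ from above in an averaged sense but does not prevent short, nearly tangential chords. So the single-chord Bernstein inequality is genuinely too weak in the tangential direction near $\p\Omega$, and the first of your two proposed fixes (a curved path dipping into $\Omega$) is not a patch but the entire content of the proof.

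What the paper actually does, and what you would need to supply: after disposing of the easy regimes $\da_\Og(\x)\ge\f1{100}$ (interior points, plain Bernstein on a segment of length $\gtrsim 1/d$) and $\da_\Og(\x)\le C_1^2/n^2$ (handled by a dilation of $\Og$ plus the univariate Remez inequality), one passes to the two-dimensional section of $\Omega$ through $\0$, $\x$, $\y$, normalizes by an affine map so that the nearby boundary point is the origin with horizontal supporting line, and writes the boundary as a convex graph $y=f(x)$. A geometric lemma (Lemma~\ref{lem-7}, after~\cite{Pr}) produces a parabola $y=\da+kx^2$ lying inside $G$ and above the graph of $f$ near $\x$. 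The move from $\x$ to $\y$ is then split into a \emph{tangential} move along this parabola --- along which $Q$ restricts to a univariate polynomial of degree $2n$, so Bernstein applies on a parameter interval of length $\sim 1$ --- and a \emph{normal} move along a vertical segment that reaches a fixed depth $\f13$ into the domain. The quantity $\rho(\x,\y)$ is compared not with a normal direction at the chord's endpoint but with the supporting line at the correctly chosen contact point $(\xi,f(\xi))$ of the parabola, via the inequality $\sqrt{\da+\b}/|\al|<1/\sqrt{k}$ of~\eqref{8-4}; this is exactly the estimate that replaces your false claim $\bog\cdot\bxi\ge c_d/L$. None of this is recoverable from the chord construction alone, so the proof as proposed has an essential gap.
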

Theorem~\ref{thm:norm control} will be proved in Section~\ref{sec 4:lec}.
The proof of Theorem \ref{thm:norm control} is essentially two-dimensional which is due to the fact that any supporting line to a 2-dimensional section of $\Omega$ can be extended to a supporting hyperplane for $\Omega$. The non-trivial case is when $\x$ and $\y$ are both close to the boundary of $\Omega$. In this case, we consider the planar section $G$ of $\Omega$ through the origin and the points $\x$ and $\y$. After an appropriate affine transform, the boundary of $G$ can be effectively parametrized so that the main geometric ingredient of  Lemma~2.4 of~\cite{Pr} is applicable. This allows us  to explicitly construct a parallelogram in $G$ containing both $\x$ and $\y$, as well as certain families of parabolas and straight segments in $G$ along which the standard one-dimensional Bernstein inequality can be applied to derive~\eqref{eqn:pxy control}. We find that it suffices to use  $\rho(\x,\y)$    with  the maximum in \eqref{eqn:def-dubiner}  being taken  over only two specific directions  which are inward normal  vectors at two boundary  points of  $\Omega$. This leads to  a certain related two-dimensional version of the metric $\rho$ which is easier to compute.

For the component \textbf{ii)},  it is sufficient to show that  the cardinality of $Y_n$ does not exceed the dimension  of $\P_{\alpha n}^d$ for a sufficiently large positive  constant  $\alpha$ depending only on $d$. Indeed, if this were not true, then one can get a contradiction using linear dependence of the  ``fast decreasing'' polynomials provided by the following theorem for  each $\x\in Y_n$. We would like to mention that the study of fast decreasing polynomials has been originated in~\cite{IvTo}, with multivariate polynomials of ``radial'' structure covered in~\cite{Kr16}.
\begin{theorem}\label{thm:polynomial construction}  Let $\Omega\subset\R^d$ be a convex body satisfying~\eqref{2-1}.
	For any $\x\in \Omega$ and $n\in\mathbb{N}$,  there exists a polynomial  $P\in\P_{ n}^d$ such that $P(\x)=1$ and
	\begin{equation}\label{eqn:fast decreasing}
		0\le P(\z)\le  C \exp(-c\sqrt{n\rho(\x,\z)})\   \qtq{for any} \z\in\Omega,
	\end{equation}
	where $C>1$  and $c\in (0, 1)$ are  constants depending only on $d$.
\end{theorem}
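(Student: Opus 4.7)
The plan is to construct $P$ as a product, over a finite collection of directions $\Xi\subset\SS^{d-1}$ depending only on $d$, of one-variable Ivanov--Totik-type fast-decreasing polynomials applied to the linear projections $\z\mapsto\z\cdot\bxi$. The univariate building block I would use is: for each interval $[\alpha,\beta]\subset\R$ with $1\le\beta-\alpha\le 2d$, each $s\in[\alpha,\beta]$, and each $m\in\mathbb{N}$, the classical Ivanov--Totik construction (rescaled to $[\alpha,\beta]$) yields a polynomial $q\in\P_m^1$ with $q(s)=1$, $0\le q(t)\le 1$ on $[\alpha,\beta]$, and
\[
q(t)\le C\exp\bigl(-cm\,\bigl|\sqrt{t-\alpha}-\sqrt{s-\alpha}\bigr|\bigr),\qquad t\in[\alpha,\beta],
\]
for some $C,c>0$ depending only on $d$.

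Now I fix a maximal $\delta$-separated subset $\Xi=\{\bxi_1,\dots,\bxi_M\}\subset\SS^{d-1}$, symmetric under $\bxi\mapsto-\bxi$, with a small constant $\delta=\delta(d)$; then $M=M(d)$ as well. For each $\bxi\in\Xi$ set $a_\bxi:=\min_{\z\in\Omega}\z\cdot\bxi$ and $b_\bxi:=\max_{\z\in\Omega}\z\cdot\bxi$, so $1\le b_\bxi-a_\bxi\le 2d$ by~\eqref{2-1}. Applying the univariate block with $\alpha=a_\bxi$, $\beta=b_\bxi$, $s=\x\cdot\bxi$, and degree $\lfloor n/M\rfloor$ produces $q_\bxi\in\P_{\lfloor n/M\rfloor}^1$, and I define
\[
P(\z):=\prod_{\bxi\in\Xi}q_\bxi(\z\cdot\bxi).
\]
Then $\deg P\le n$, $P(\x)=1$, and $0\le P(\z)\le 1$ throughout $\Omega$.

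For the decay bound, take any $\z\in\Omega$ and set $\rho_0:=\rho(\x,\z)$. Let $\bxi^*\in\SS^{d-1}$ (nearly) realize the supremum in~\eqref{eqn:def-dubiner}, and pick $\bxi_i\in\Xi$ with $\|\bxi_i-\bxi^*\|\le\delta$. The theorem then reduces to a \emph{stability estimate} of the form
\[
\bigl|\sqrt{\z\cdot\bxi_i-a_{\bxi_i}}-\sqrt{\x\cdot\bxi_i-a_{\bxi_i}}\bigr|\ge c_0\min\bigl(\rho_0,\sqrt{\rho_0/n}\bigr),
\]
with $c_0=c_0(d)>0$, for then the single factor $q_{\bxi_i}$ of $P$ already gives
\[
P(\z)\le q_{\bxi_i}(\z\cdot\bxi_i)\le C\exp\!\Bigl(-c\tfrac{n}{M}\min\bigl(\rho_0,\sqrt{\rho_0/n}\bigr)\Bigr)\le C\exp\bigl(-c'\sqrt{n\rho_0}\bigr),
\]
which is precisely~\eqref{eqn:fast decreasing}.

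The hard part is the stability estimate, which is where the geometry of the convex body $\Omega$ really enters. Although the maps $\bxi\mapsto\x\cdot\bxi$, $\bxi\mapsto\z\cdot\bxi$, and the one-sided support function $\bxi\mapsto a_\bxi$ are all Lipschitz in $\bxi$, the square root is only H\"older-$1/2$, so a rotation of $\bxi^*$ by the angle $\delta$ can distort each of $\sqrt{\x\cdot\bxi-a_\bxi}$ and $\sqrt{\z\cdot\bxi-a_\bxi}$ by $O(\sqrt{\delta})$. In the \emph{interior regime}, where both $\x\cdot\bxi^*-a_{\bxi^*}$ and $\z\cdot\bxi^*-a_{\bxi^*}$ are bounded below by a positive constant, one can recover the full Lipschitz stability $\gtrsim\rho_0$ and in fact gets the much sharper decay $\exp(-cn\rho_0)$. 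The delicate case is the \emph{boundary regime}, where at least one of these two quantities is tiny---geometrically, when $\x$ or $\z$ lies close to a supporting hyperplane of $\Omega$ nearly perpendicular to $\bxi^*$. There I would argue via the elementary inequality $|\sqrt{u}-\sqrt{v}|\le\sqrt{|u-v|}$ together with a case analysis on the sizes of $\x\cdot\bxi^*-a_{\bxi^*}$, $\z\cdot\bxi^*-a_{\bxi^*}$ relative to $n^{-1}$, and expect to recover only the weaker stability of order $\sqrt{\rho_0/n}$. This boundary regime is precisely what forces the weaker exponent $\sqrt{n\rho}$ in~\eqref{eqn:fast decreasing} rather than $n\rho$.
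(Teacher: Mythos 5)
Your construction has a genuine gap at the step you yourself flag as the crux: the ``stability estimate'' is false, and no case analysis will rescue it, because the failure is not about the sizes of $\x\cdot\bxi^*-a_{\bxi^*}$ and $\z\cdot\bxi^*-a_{\bxi^*}$ but about the fact that a fixed finite net of directions cannot see the square--root singularity of the Dubiner metric at the boundary. Concretely, take $d=2$, $\Omega=[-1,1]^2$, $\x=(0,-1)$ the midpoint of the bottom edge, and $\z=(0,-1+\epsilon)$. The supremum in \eqref{eqn:def-dubiner} is attained at $\bxi^*=(0,1)$ and gives $\rho(\x,\z)=\sqrt{\epsilon}$. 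But for any direction $\bxi_i=(\sin\phi,\cos\phi)$ with $0<|\phi|$ of order $\delta$ one computes $\x\cdot\bxi_i-a_{\bxi_i}=\sin|\phi|$ and $\z\cdot\bxi_i-a_{\bxi_i}=\sin|\phi|+\epsilon\cos\phi$, so
\begin{equation*}
\bigl|\sqrt{\z\cdot\bxi_i-a_{\bxi_i}}-\sqrt{\x\cdot\bxi_i-a_{\bxi_i}}\bigr|\approx \frac{\epsilon}{2\sqrt{\delta}},
\end{equation*}
and all other directions of the net contribute at most a comparable amount. Choosing $\epsilon=n^{-1}$, so that $\rho_0=n^{-1/2}$, your claimed lower bound $c_0\sqrt{\rho_0/n}=c_0n^{-3/4}$ fails badly against the actual value $\sim n^{-1}/\sqrt{\delta}$ (with $\delta$ a fixed constant). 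Worse, even summing the exponents over \emph{all} $M(d)$ directions of the net, your product is only guaranteed to be $\le C\exp(-c\,n\epsilon/\sqrt{\delta})=C\exp(-c/\sqrt{\delta})$, a constant, whereas \eqref{eqn:fast decreasing} demands $\exp(-c\,n^{1/4})$ at this point. Since the Ivanov--Totik factors are essentially sharp, the construction itself, not just the proof, fails here. Rotating the square shows the counterexample is robust to any particular choice of net, and making $\delta$ depend on $n$ forces $M\to\infty$ and destroys the degree budget $n/M$ per factor.

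The underlying obstruction is that near the boundary the extremal direction $\bxi^*$ must be the inward normal at the nearby boundary point essentially exactly; missing it by a fixed angle replaces the square-root behaviour by a Lipschitz one. This is why the paper does not use a fixed set of directions. Instead it builds $P$ as a product of factors $p_{\bog}$ indexed by a set of auxiliary points $\bog\in\Omega$ (not directions), organized into dyadic $\rho$-annuli around $\x$; each factor uses the direction adapted to the particular pair $(\x,\bog)$ (Lemma~\ref{lem:resolve}, via a Chebyshev polynomial composed with the projection onto the near-maximizing $\bxi$), has degree $\sim 1/\rho(\x,\bog)$, and is raised to the power $2^j$ on the $j$-th annulus. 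The number of points per annulus is bounded by a constant via the doubling property (Theorem~\ref{thm:doubling}), so the total degree sums to $O(n)$, and the pointwise decay at $\z$ comes from Theorem~\ref{thm:norm control} applied to the factor whose auxiliary point is $\rho$-close to $\z$. If you want to keep a product-over-projections structure, you must let the set of directions depend on $\x$ and on the dyadic scale, which is essentially what the paper's scheme does.
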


Theorem \ref{thm:polynomial construction} is proved in Section \ref{sec6:lec}.
An important ingredient for  the  proof of \cref{thm:polynomial construction} and for obtaining the above mentioned  contradiction is the doubling property  stated in the following theorem which is proved  in Section~\ref{sec5:lec}.
\begin{theorem}\label{thm:doubling}  Let $\Omega\subset\R^d$ be a convex body satisfying~\eqref{2-1}.
	For any $\x\in\Omega$ and $h>0$
	\begin{equation}\label{eqn:doubling}
		\lambda_d(B_\rho(\x,2h))\le 4^d \lambda_d(B_\rho(\x,h)).
	\end{equation}
\end{theorem}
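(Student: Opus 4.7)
The plan is to exhibit an explicit affine contraction $T_\x:\R^d\to\R^d$ with $T_\x(B_\rho(\x,2h))\subset B_\rho(\x,h)$ and $|\det dT_\x| = 4^{-d}$. Once this is done the desired bound is immediate from
\[
4^{-d}\lambda_d(B_\rho(\x,2h)) = \lambda_d(T_\x(B_\rho(\x,2h))) \le \lambda_d(B_\rho(\x,h)).
\]
The natural choice is the radial contraction toward $\x$ by factor $1/4$,
\[
T_\x(\y) := \tfrac34\x + \tfrac14\y,
\]
which by convexity of $\Omega$ satisfies $T_\x(\Omega)\subset\Omega$ and clearly has Jacobian determinant $4^{-d}$.

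The heart of the matter is the pointwise inequality $\rho(\x,T_\x(\y))\le h$ whenever $\rho(\x,\y)\le 2h$. Fix $\bxi\in\S^{d-1}$ and abbreviate $a:=\sqrt{\x\cdot\bxi-a_\bxi}$, $b:=\sqrt{\y\cdot\bxi-a_\bxi}$, so that $|b-a|\le 2h$ by hypothesis. A direct computation gives
\[
T_\x(\y)\cdot\bxi - a_\bxi = \tfrac34(\x\cdot\bxi-a_\bxi) + \tfrac14(\y\cdot\bxi-a_\bxi) = \tfrac{3a^2 + b^2}{4}.
\]
Writing $c := \sqrt{(3a^2+b^2)/4}$, the task reduces to the scalar claim $|c-a|\le h$. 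From $c^2 - a^2 = \tfrac14(b^2-a^2)$ one has
\[
|c-a| = \frac{|b-a|(b+a)}{4(c+a)},
\]
and the elementary bound $c^2\ge b^2/4$ gives $2c\ge b$, hence $2(c+a)\ge b+2a\ge b+a$. Plugging in,
\[
|c-a| \le \frac{|b-a|(b+a)}{2(b+2a)} \le \frac{|b-a|}{2} \le h.
\]
Since $\bxi$ was arbitrary, $\rho(\x,T_\x(\y))\le h$, and the proof is complete.

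No real obstacle is expected; the only step requiring thought is the choice of contraction ratio. Repeating the calculation for $T_\x(\y) = (1-\lambda)\x + \lambda\y$ produces the estimate $|c-a|\le 2h\sqrt{\lambda}$, so $\lambda = 1/4$ is precisely the largest value for which this is bounded by $h$. A naive midpoint contraction ($\lambda = 1/2$) would give the stronger factor $2^d$ but fails near $\p\Omega$: taking $a=0$, $b=2h$ yields $c = \sqrt{2}\,h > h$. This square-root behavior of $\rho$ at the boundary is exactly the phenomenon responsible for the constant $4^d$ in~\eqref{eqn:doubling}.
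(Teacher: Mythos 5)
Your proof is correct and is essentially the paper's argument: both rest on the factor-$4$ homothety centered at $\x$ (you use the contraction $T_\x=\phi^{-1}$ pointwise, the paper uses the dilation $\phi$ applied to "strips"), and your scalar claim $|c-a|\le h$ is exactly the paper's Lemma~\ref{lem-4-1} with $s=a^2$, $t=b^2$, $\wt s=c^2$. Your algebraic derivation of that lemma via $|c-a|=|c^2-a^2|/(c+a)$ is a bit cleaner than the paper's two-case analysis, but the approach is the same.
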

Finally, the proof of our main result,  Theorem \ref{thm:optimal mesh}, is given in Section \ref{sec 7:lec}.

We conclude this section with remarks about two extensions of the main result. We chose not to include them into Theorem~\ref{thm:optimal mesh} for clarity of the statement. 
\begin{remark}
	A minor modification of the proof yields the following $\eps$-version of Theorem~\ref{thm:optimal mesh}. For any $\eps>0$ there exists a constant $C$ depending only on $d$ and $\eps$ such that for every positive integer $n$ and  every convex body $\Og\subset \R^d$, there exist   $\x_1,\dots, \x_N\in\Og$ with  $N\leq C n^d$   such that
	\begin{equation}\label{eqn:eps-version}\|Q\|_\Og \leq (1+\eps) \max_{1\leq j\leq N}|Q(\x_j)|\  \ \text{ for every $Q\in \Pi_n^d$.}\end{equation}
\end{remark}

\begin{remark}
	A straightforward change in the proof implies that for any $\eps>0$ there exists a constant $c_d>0$ depending only on $d$ and $\eps$ such that any $\Lambda:=\{\x_1,\dots, \x_N\}\subset\Omega$ which is $c_d/n$-covering w.r.t. $\rho$, i.e. $\Og=\bigcup_{j=1}^N B_\rho (\x_j, c_d/n)$, satisfies~\eqref{eqn:eps-version}. Furthermore, if $\Lambda$ is an $\eta/n$-separated set w.r.t. $\rho$, where $0<\eta\le c_d$ is an absolute constant, then $N\le C(\eta,d,\eps) n^d$. In particular, this allows more flexibility for the construction of optimal meshes as a suitable $\eta/n$-separated set does not need to be maximal, but instead is required to be $c_d/n$-covering with $c_d$ being larger than $\eta$ (for a maximal set we have $\eta=c_d$).
\end{remark}

\section{Variation of polynomials}\label{sec 4:lec}

This section is devoted to the proof of \cref{thm:norm control}.

Let  $Q\in \Pi_n^d$  be such that $\|Q\|_{\Og}=1$. Our aim is to show that
\begin{equation}\label{3-2}
|Q(\x)-Q(\y)|\leq C_\ast n\rho(\x,\y),\  \ \x, \y\in\Og.
\end{equation}
 Without loss of generality, we may assume that $0<\rho(\x,\y)\leq \f {c_1}n$ (in particular, we can assume that $n$ is sufficiently large),  since otherwise \eqref{3-2} with $C_\ast=\f 2{c_1}$ holds trivially. Here and throughout this section, $c_1$ denotes a fixed  sufficiently small positive constant depending only on $d$.
 By symmetry, we can  also assume that $\x\neq \0$.  Then  by convexity and  \eqref{2-1},   there exists a unique number $\da_{\Og}(\x)\in [0, d)$ such that $\x^\ast =\x+\da_\Og(\x) \f {\x}{\|\x\|}\in\p\Og$. Geometrically, $\delta_\Omega(\x)$ is the distance from $\x$ to $\partial\Omega$ along the ray with the initial point $\0$ in the direction of $\x$.

The proof of \eqref{3-2} is quite long, so we break it into several parts. First, in Subsection \ref{sec:3-1}, we collect some  known facts and results on  convex functions and algebraic polynomials in one variable,   which  will be needed in the proof of \eqref{3-2}. After that, in Subsection \ref{subsec4-1}, we treat  the simpler  case when   $\da_{\Og}(\x) \ge \f 1{100}$ or  $0< \da_{\Og} (\x)\leq \f {C_1^2}{n^2}$. Here and throughout this section, the letter $C_1$ denotes a fixed  sufficiently large  positive constant depending only on $d$.
Indeed, if $\da_{\Og}(\x) \ge \f 1{100}$, then both  $\x$ and $\y$ lie ``far'' away from  the boundary $\p\Og$ of $\Og$, and \eqref{3-2} can be deduced directly from the one-dimensional Bernstein inequality.
On the other hand,
 if   $0< \da_{\Og} (\x)\leq \f {C_1^2}{n^2}$, then one can use  Remez's  inequality to reduce  the problem to the case of $ \da_{\Og} (\x)\ge  \f {C_1^2}{n^2}$ for a slightly wider class of domains $\Og$ satisfying
 \begin{equation}\label{eqn:2-1relaxed}
 	B(\0,1)\subset\Omega\subset B(\0,2d)
 \end{equation}
instead of~\eqref{2-1}.
Finally,  Subsections  \ref{subsec4-2} and \ref{subsec4-3} are devoted to the proof of \eqref{3-2} for  the  remaining more involved case of  $\f {C_1^2}{n^2}\leq \da_{\Og} (\x)\leq  \f 1 {100}$ under the assumption~\eqref{eqn:2-1relaxed}.
Indeed,   in Subsection  ~\ref{subsec4-2},  we show that the problem \eqref{3-2}  can be reduced to an inequality   related to  a two-dimensional version of the metric $\rho$    on a planar convex domain. Such a two-dimensional   inequality  is  proved
in Subsection ~\ref{subsec4-3}.

\subsection{Notations and preliminaries} \label{sec:3-1}
Let $\bde_1=(1,0,\dots,0)$, $\dots$, $\bde_d=(0,\dots, 0, 1)$ denote the standard canonical basis of $\R^d$. We call the coordinate axis in the direction of $\bde_i$ the $i$-th coordinate axis  for  each $1\leq i\leq d$. 
For a finite set $E\subset \RR^d$, we denote by 
$\# E$ the cardinality of $E$. 
The boundary and the interior of a subset $K$ of $\R^d$ w.r.t. the Euclidean metric are denoted as $\partial K$ and $K^\circ$, respectively. For $K\subset \R^d$, we let ${\rm conv}\,(K)$ be the convex hull of $K$.

Now let us discuss certain properties of the metric~\eqref{eqn:def-dubiner}. First, we will establish equivalence under affine transforms. If $\T$ is a non-degenerate affine transform on $\R^d$,
then for each  fixed $\bxi\in\S^{d-1}$ and any $\x,\y\in\Omega$,  we have
\begin{align*}
	\sqrt{\T \x \cdot \bxi - \min_{\z\in\Omega} \T \z \cdot \bxi}
	- &
	\sqrt{\T \y \cdot \bxi - \min_{\z\in\Omega} \T \z \cdot \bxi} \\
	&\quad=\sqrt{ \x \cdot \T^*\bxi - \min_{\z\in\Omega}  \z \cdot \T^*\bxi}
	-
	\sqrt{ \y \cdot \T^* \bxi - \min_{\z\in\Omega} \z \cdot \T^* \bxi} \\
	&\quad =\|\T^*\bxi\|^{\frac12} \left(
	\sqrt{\x \cdot \bta - \min_{\z\in\Omega} \z \cdot \bta}
	-
	\sqrt{\y \cdot \bta - \min_{\z\in\Omega} \z \cdot \bta}
	\right),
\end{align*}
where $\bta:=\frac{\T^*\bxi}{\|\T^*\bxi\|}\in\S^{d-1}$,  and $\T^*$ denotes  the transpose of the linear mapping $\T -\T (\0)$. Therefore, taking the maximum over $\bxi\in\sph$ yields
\begin{equation}\label{eqn:metric after affine transform}
	\rho_{\T\Omega}(\T\x,\T\y)\le \|\T^*\|^{\frac12}\, \rho_{\Omega}(\x,\y),
\end{equation}
where
$\|\T^\ast\|:= \max_{\bxi\in\S^{d-1}}\|\T^*\bxi\|$.
Clearly, we can also apply this last estimate to the inverse  of  $\T$, leading to  the equivalence
\begin{equation}\label{eqn:metric after affine transform}
	\|\T^\ast \|^{-\f12} \rho_{\T\Omega}(\T\x,\T\y)\le  \rho_{\Omega}(\x,\y)\leq \|(\T^\ast)^{-1} \|^{\f12} \rho_{\T\Omega}(\T\x,\T\y),\   \   \forall \x,\y\in\Og.
\end{equation}
The metric $\rho_{\Og}$  possesses several other useful  properties as well. First,
by \eqref{eqn:def-dubiner},
\begin{align}
	\rho_\Omega(\x,\y)&=\max_{\bxi\in\S^{d-1}}\f { |(\x-\y)\cdot \bxi|}
	{\sqrt{\x\cdot\bxi-a_\bxi}+\sqrt{\y\cdot\bxi-a_\bxi}}\ge \max_{\bxi\in\S^{d-1}}\f { |(\x-\y)\cdot \bxi|}
	{2\max_{\z, \z'\in\Og} \sqrt{(\z-\z')\cdot\bxi}}.\label{2-4}
\end{align}
Under the assumption \eqref{2-1}, we have
\[ \max_{\z, \z'\in\Og} (\z-\z')\cdot\bxi\leq  \max_{\z\in B(\0, 2d)} \z\cdot \bxi\leq 2d,\   \  \forall \bxi\in\sph,\]
hence
\begin{equation}\label{2-5}
	\|\x-\y\|\leq 2\sqrt{2d}\,  \rho_\Og(\x,\y),\   \    \forall \x,\y\in\Og.
\end{equation}
Second, by the definition and \eqref{2-4}, it is clear that if $\wt\Og$ is a convex set  such that  $\Og\subset \wt \Og$, then
\begin{equation}\label{2-6}
	\rho_{\wt \Og} (\x,\y) \leq \rho_{\Og} (\x, \y),\   \   \ \forall \x,\y\in\Og.
\end{equation}

Next we need some background on convex functions. Let $f: D \rightarrow \mathbb{R}$ be a continuous and convex function  on a convex domain $D\subset \R^d$. A vector $\bg$ in $\mathbb{R}^{d}$ is called a subgradient of $f$ at a point $\bv \in D$ if
$$
f(\bu)-f(\bv)- \bg\cdot (\bu-\bv) \geqslant 0 \quad \forall \bu \in D .
$$
Denote by $\partial f(\bv)$  the set of all subgradients of $f$ at $\bv\in D$.
As is well known,   the set  $\partial f(\bv)$  is nonempty, compact and convex for each $ \bv\in D^\circ$.
The directional derivative of $f$ at a point $\bu \in D^\circ$ in the direction $\bxi\in\R^d$ is defined by
$$
D_{\bxi} f(\bu) := \lim _{t \to 0+ } \frac{f(\bu+t \bxi )-f(\bu )}{t} .
$$
This quantity always exists and is finite  since $f$ is convex and continuous.
Moreover, we have
\begin{equation}\label{6-1}
D_{\bxi} f(\bu) =\sup _{\bg \in \partial f(\bu )} \bg\cdot  \bxi,\   \ \bxi\in\R^d.
\end{equation}
Subgradients possess several properties.
First, if $\al\ge 0$, then $\p (\al f)(\u)=\al \p f(\u)$.
Second, if $f=f_1+f_2+\dots+f_m$ and each $f_j$ is convex on $D$, then
\[ \p f(\u)=\p f_1(\u)+\dots+\p f_m (\u),\   \    \u\in D.\]
Third, if  $f$ is convex on $\R^d$, $A$ is a $d\times d$ nonsingular matrix, $\x_0\in\R^d$ and $h(\x)=f(A \x +\x_0)$, then
\[ \p h(\x) =A^t (\p f)(A\x+\x_0),\  \ \x\in\R^d.\]
In the case of one variable, for each convex function $f:[a,b]\to\R$,  the one-sided derivatives $f_-'$ and $f'_+$ exist and  are non-decreasing on $(a,b)$, with $f'_-(x)\le f'_+(x)$ for all  $x\in(a,b)$.  For further results  on  convex functions, we  refer to~\cite{BV, Ro}.

Finally, the following one-dimensional Bernstein inequality for algebraic polynomials, which will be used  repeatedly  in this section, can be found in  the book \cite[p.98]{De-Lo}.
\begin{lemma}\cite[p.98]{De-Lo}
	Let $-\infty <a<b<\infty$. Then for any $P\in\Pi_n^1$ and $x\in [a,b]$,
	\begin{equation}\label{1D-Bernstein}
	|P'(x)| \sqrt{(x-a)(b-x)} \leq n \|P\|_{[a,b]}.
	\end{equation}
\end{lemma}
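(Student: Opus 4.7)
The plan is a standard two-step reduction followed by an appeal to the trigonometric Bernstein inequality. First I would reduce to the case $[a,b]=[-1,1]$ via the affine change of variable $x=\frac{a+b}{2}+\frac{b-a}{2}t$. Setting $\widetilde P(t):=P(x(t))$, one has $\widetilde P\in\Pi_n^1$, $\|\widetilde P\|_{[-1,1]}=\|P\|_{[a,b]}$, $\widetilde P'(t)=\frac{b-a}{2}P'(x)$, and $\sqrt{(x-a)(b-x)}=\frac{b-a}{2}\sqrt{1-t^2}$, so the two factors of $(b-a)/2$ cancel and the inequality reduces to
\[ |\widetilde P'(t)|\sqrt{1-t^2}\le n\|\widetilde P\|_{[-1,1]},\qquad t\in[-1,1]. \]

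Second, I would introduce the Chebyshev substitution $t=\cos\theta$ and set $T(\theta):=\widetilde P(\cos\theta)$. Because $\cos(k\theta)$ is a polynomial of degree $k$ in $\cos\theta$ for each $k=0,1,\dots,n$, the function $T$ is a (real, even) trigonometric polynomial of degree at most $n$, with $\|T\|_{\R}=\|\widetilde P\|_{[-1,1]}$. Differentiating yields $T'(\theta)=-\sin\theta\cdot\widetilde P'(\cos\theta)$, whence $|T'(\theta)|=|\widetilde P'(\cos\theta)|\sqrt{1-\cos^2\theta}$, and the claim becomes equivalent to the trigonometric Bernstein inequality $|T'(\theta)|\le n\|T\|_{\R}$ for every trigonometric polynomial $T$ of degree at most $n$.

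For the third step I would invoke M.~Riesz's interpolation identity, which expresses $T'(\theta_0)$ as a finite linear combination of the $2n$ values $T(\theta_0+\tau_k)$ at the shifted equispaced nodes $\tau_k=(2k-1)\pi/(2n)$, $k=1,\dots,2n$, with explicit coefficients whose absolute values sum to exactly $n$; taking absolute values and bounding each $|T(\theta_0+\tau_k)|$ by $\|T\|_{\R}$ yields $|T'(\theta_0)|\le n\|T\|_{\R}$ at once. The main obstacle is precisely this last step: the trigonometric Bernstein inequality is the substantive content, while the affine rescaling and the substitution $t=\cos\theta$ are routine manipulations. Alternatives to Riesz's identity --- the extremal property of the Chebyshev polynomial $T_n$, a Fej\'er-kernel convolution argument, or a Phragm\'en--Lindel\"of complex-analytic approach --- all produce the same bound, but none is genuinely elementary.
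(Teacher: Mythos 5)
Your proposal is correct and is exactly the classical derivation found in the reference the paper cites (DeVore--Lorentz, p.~98): the paper itself offers no proof, only the citation, and the cited proof proceeds precisely by affine reduction to $[-1,1]$, the substitution $t=\cos\theta$, and the trigonometric Bernstein inequality. All of your reductions check out (in particular $(x-a)(b-x)=\tfrac{(b-a)^2}{4}(1-t^2)$ so the Jacobian factors cancel, and Riesz's interpolation formula does give the sharp constant $n$), so there is nothing to add.
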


\subsection{Reduction to the case of   $ \f {C_1^2}{n^2}\leq \da_{\Og}(\x) \leq  \f 1{100}$ }\label{subsec4-1}
In this subsection, we prove that
 it is sufficient to show  \eqref{3-2} for  the case of  $\f {C_1^2}{n^2}\leq \da_{\Og} (\x)\leq  \f 1 {100}$, where   $\x\in\Og\setminus\{0\}$, $ \y\in\Omega$ are   two fixed points satisfying
 $n\rho(\x,\y) \leq c_1$.

	To see this, we first assume that      $\da_{\Og}(\x) \ge \f 1{100}$.   Since $\|\x^\ast\|\leq d$ and $B:=B(\0,1)\subset \Og$, it is easily seen  that  $$B(\x, \da_\Og(\x)/d) \subset \text{conv} \Bl\{ \x^\ast\cup B\Br\}\subset \Og.$$
On the other hand, using \eqref{2-5}, we have
 $$\|\x-\y\|\leq 2\sqrt{2d}\rho(\x, \y)\leq \f{2\sqrt{2d} c_1}n<r_0:=\f 1 {200 d}$$
 provided that the constant  $c_1$ is small enough. Thus,
 $$\y\in B(\x, r_0)\subset B(\x, 2r_0)\subset \Og.$$
 Now setting $\bxi:=\f {\y-\x}{\|\y-\x\|}$, and applying the Bernstein inequality \eqref{1D-Bernstein}  along the line segment $[\x-2r_0\bxi, \x+2r_0\bxi]\subset \Og$, we obtain
\begin{align*}
|Q(\x)-Q(\y)| &\leq \|\x-\y\| \max_{0\leq t \leq  r_0}\Bl| \f {d}{dt} Q(\x+t\bxi)\Br| \leq  2\sqrt{2d} \rho(\x,\y) \frac{n}{\sqrt{3}r_0}  \|Q\|_{[\x-2r_0\bxi, \x+2r_0\bxi]} \\
&\leq  C_d n \rho(\x,\y),
\end{align*}
proving the inequality \eqref{3-2}  for the  case of  $\da_{\Og}(\x) \ge \f 1{100}$.

Next, we show  that for the proof of  \eqref{3-2} for  $0<\da_\Og(\x) \leq  \f 1{100}$, it is enough to consider  the case  of   $\da_\Og(\x) \ge \f {C_1^2}{n^2}$.    To see this, let $\ld:=1+4n^{-2} C_1^2$, and
   define $ \wt \Og=\ld\Og$.
   Clearly,   $\wt \Og$ is a convex domain  satisfying
   $ \Og\subset \wt\Og \subset B\Bl(0,  \ld d\Br)$.
  Moreover,
   \[\da_{\wt\Og} (\x)=\f {4 C_1^2} {n^2} \|\x\|+\da_\Og(\x) \ld\ge \f {C_1^2}{n^2},\]
   where the last step uses the fact that $\|\x\|\ge 1-\da_{\Og}(\x)>\f12$.
   On the other hand, setting
    $$h_\Og(\bxi): =\max\Bl\{ t\in [0,d]:\  \  t \bxi \in\Og\Br\},\  \  \bxi\in\sph,$$ and using  the univariate Remez inequality (see \cite{MT2}), we obtain
   \begin{align*}
   \|Q\|_{\wt \Og} &=\max_{\bxi\in\sph} \max _{0\leq r \leq \ld h_{\Og}(\bxi)} |Q(r\bxi) | \leq \max_{\bxi\in\sph} \max _{0\leq r \leq h_{\Og}(\bxi)  +4C_1^2 d n^{-2}} |Q(r\bxi) | \\
   &\leq C_d \max_{\bxi\in\sph} \max _{0\leq r \leq h_{\Og}(\bxi) } |Q(r\bxi) |=C_d \|Q\|_{\Og}.
   \end{align*}
   Thus, once  \eqref{3-2} is proved on the    convex body $\wt\Og$  for   the case when   $\da_{\wt \Og}(\x) \ge \f {C_1^2}{n^2}$, then using \eqref{2-6}, one has
   \[ |Q(\x)-Q(\y)|\leq C n \rho_{\wt\Og}(\x,\y) \leq C n  \rho_{\Og}(\x,\y),\]
   proving \eqref{3-2} on  the domain $\Og$.

%
%

\subsection{Reduction to a two-dimensional problem} \label{subsec4-2}  Set  $\da:= \da_{\Og} (\x)$. Assume  that
 $\f {C_1^2}{n^2}\leq \da\leq  \f 1 {100}$ and  $n\rho(\x,\y) \leq c_1$.
Our goal   in this subsection is to  reduce  \eqref{3-2}  to   a two-dimensional  inequality on a  planar  convex domain, under the assumption~\eqref{eqn:2-1relaxed}.

First, consider  the composition $\mathcal{T}_1:=\mathcal{Q}\circ \mathcal{S}$ of the shift operator $\mathcal{S}$ on $\R^d$ that moves $\x^\ast:=\x+\da \f {\x}{\|\x\|}$ to the origin, and   a rotation $\mathcal{Q}$ on $\R^d$ such that  $$\T_1(\0)=-\mathcal{Q} (\x^\ast) =\x_0:=( \0, \ell)\   \ \text{and}\  \ \T_1 (\y)=\mathcal{Q}(\y-\x^\ast)=(y_1,0,\dots, 0,  y_d)$$
for   $ \ell=\|\x^\ast\|\in [1,2d]$, and  some  $y_1, y_d\in \R$.
 Clearly,
  $\Og_1:=\mathcal{T}_1(\Og)$ is a convex domain in $\R^d$  such  that $\mathcal{T}_1 (\x) =(\0,\da)\in \Og_1$,  $\T_1 (\y)=(y_1,0,\dots, 0,  y_d)\in\Og_1$,
  $\mathcal{T}_1 (\x^\ast) =\0\in\p \Og_1$   and
$B(\x_0, 1)\subset \Og_1\subset B(\x_0, 2d)$.
Since  the metric $\rho_{\Og}$ is  shift-invariant and rotation-invariant,
we also have
\[ \rho_{\Og}(\x,\y)=\rho_{\Og_1}\Bl(\mathcal{T}_1(\x), \mathcal{T}_1(\y)\Br).\]
Thus, it is enough to prove the inequality \eqref{3-2} for $\Og=\Og_1$,  $\x=(\0,\da)$ and  $\y= (y_1,0,\dots, 0,  y_d)$ and $\rho=\rho_{\Og_1}$.

For the reminder of the proof, we will use  a slight abuse of notations that   $\x=(\0,\da)$ and  $\y= (y_1,0,\dots, 0,  y_d)$ and $\rho=\rho_{\Og_1}$.
We will  use  the notations  $\wt \z, \wt \u, \wt \w,\dots$ to  denote points in $\R^{d-1}$.  Sometimes we find it more convenient to    write a  point $\z\in\R^d$ in the form $\z=(\z_x, z_y)$ with $\z_x\in\R^{d-1}$ and $z_y\in \R$.

Now  we define a function $\wt f: B(\0, 1)_{\R^{d-1}}\to \R$ by
\[ \wt f (\wt \z) :=\min \Bl\{ t\leq \ell: \  \ (\wt \z, t)\in \Og_1\Br\},\   \ \wt \z \in B(\0, 1)_{\R^{d-1}}.\]
Since  $B(\x_0, 1) \subset \Og_1\subset B(\x_0, 2d)$  and $\Og_1$ is convex,   it is easily seen that for each  $\wt \z \in B(\0, 1)_{\R^{d-1}}$,
$\ell-2d\leq  \wt f(\wt \z)\leq \ell$, $(\wt \z, \wt f (\wt \z))\in\p \Og_1$ and  $$
\Og_1^\circ  \cap \Bl(\{\wt \z\}\times (-\infty, \ell] \Br)= \{\wt \z\} \times (\wt f(\wt \z), \ell].$$
This implies  that  $\wt f(\0)=0$, and
\begin{align}
\Og_1\cap \Bl(B(\0, 1)_{\R^{d-1}} \times [\ell-2d, \ell]\Br)&=\Bl\{ (\z_x, z_y):\    \ \|\z_x\|\leq 1\  \ \text{and}\  \    \wt f( \z_x) \leq z_y \leq \ell\ \Br\},\label{3-4a}\\
\p\Og_1\cap \Bl(B(\0, 1)_{\R^{d-1}} \times [\ell-2d, \ell]\Br)&=\Bl\{ (\z_x, z_y):\  \  z_y= \wt f( \z_x)\   \ \text{and}\  \  \|\z_x\|\leq 1\Br\}.\label{3-5a}
\end{align}
Since the set $ \Og_1\cap \Bl(B(\0, 1)_{\R^{d-1}} \times [\ell-d, \ell]\Br)$ is convex,  the function   $\wt f: B(\0, 1)_{\R^{d-1}} \to \R$ is   continuous and  convex. In particular, this implies that for each fixed  $\wt \z\in B(\0, 1)_{\R^{d-1}}\setminus \{0\}$,
 the function $t\in [-1,1]\to \wt f \Bl( t \f {\wt \z}{\|\wt \z\|}\Br)$ is convex, and hence
\begin{align*}
-2d\leq \f{\wt f \Bl(-\f {\wt \z}{\|\wt \z\|} \Br)-\wt f (\0)}{-1-0}\leq \f {\wt f (\wt \z)-\wt f (\0)}{\|\wt \z\|-0} \leq \f{\wt f\Bl( \f {\wt \z} {\|\wt \z\|}\Br)-\wt f(\0)}{1-0}\leq 2d.
\end{align*}
It follows that
\begin{equation}\label{6-3-a}
|\wt f (\wt \z) |\leq 2d  \|\wt \z\|,\   \ \forall \wt \z \in B(\0, 1)_{\R^{d-1}},
\end{equation}
and
\begin{align}
D_{\wt\bxi} \wt f(\0)\leq 2d\   \ \text{whenever $\wt\bxi\in\SS^{d-2}$}. \label{6-3}
\end{align}
A similar argument  also shows that  for any  $\|\wt \z \|\leq \f 1 {20d}$ and $\wt\bxi\in\SS^{d-2}$,
\begin{align}
D_{\wt\bxi} \wt f(\wt \z) &\leq \f {\wt f (\wt \z+(1-\f 1{20d} )\wt\bxi)-\wt f(\wt \z)}    { 1-\f 1{20d}} \leq \f {2d} {1-\f 1 {20d}} <3d.\label{6-2}
\end{align}
Combining \eqref{6-2} and  \eqref{6-3} with  \eqref{6-1}, we obtain
\begin{align} \sup_{\bg\in \p \wt f(\0) } \|\bg\|\leq 2d \  \  \text{and}\  \
\sup_{\bg\in \p \wt f(\wt \z) } \|\bg\|\leq 3d\   \    \    \text{whenever }\  \ \| \wt \z\|\leq \f 1 {20d}.\label{6-5}
\end{align}

Now we  set
\begin{equation} 
G_1:=\Bl\{ (\z_x, z_y)\in\R^d:\    \ \|\z_x\|\leq \f 1 {20d}\  \ \text{and}\  \    \wt f(\z_x) \leq z_y \leq \f 13 +\wt f'(\0) \cdot \z_x\ \Br\}. \label{7-5} 
\end{equation}
Here and throughout the proof,   $\wt f'(\wt\u)$ denotes a   subgradient of $\wt f$ at a point $\wt\u$
satisfying
$D_{\bde_1} \wt f(\wt \u)=\bde_1\cdot \wt f'(\wt\u)$, where $\bde_1=(1,0,\dots, 0)\in\R^{d-1}$.
By \eqref{3-4a},  \eqref{3-5a} and \eqref{6-5}, as $(2d+3d)\frac{1}{20d}<\frac13$, $G_1$ is a convex subset of $\Og_1$, and
\begin{equation}
 G_1\cap\p \Og_1= \Bl\{ (\z_x, \wt f (\z_x)):\    \ \|\z_x\|\leq \f 1 {20d}\Br\}.\label{7-6}
\end{equation}
Clearly, $\x=(\0,\da)\in G_1$.
We also have   $\y=(y_1,0,\dots,0, y_d)\in G_1$. Indeed, since $\rho(\x,\y)\leq \f {c_1}n$ and $c_1$ is a sufficiently small constant depending only on $d$, we have
$ |y_1|\leq \|\y-\x\|\leq 2\sqrt{2d} \rho(\x,\y) \leq \f 1 {20 d},$
and
$|y_d|\leq \da +|y_d-\da|\leq \da + 2\sqrt{2d} \rho(\x,\y)< \f 13 +y_1\wt f'(\0)\cdot \bde_1$, which, by   \eqref{7-5},  implies  that $\y\in G_1$.

Second,  we define $f: B(\0, 2)_{\R^{d-1}}\to \R$ by
\[ f( \wt{\bu}) := \wt f \Bl( \f { \wt{\bu}}{20 d}\Br) -\f 1 {20 d} \wt f'(\0) \cdot  \wt{\bu},\   \    \   \wt{\bu}\in B(\0, 2)_{\R^{d-1}}.\]
Clearly, $f$ is a convex function on $B(\0, 2)_{\R^{d-1}}$ satisfying $f(\0)=0$ and  $\0\in\p f(\0)$. By \eqref{6-3-a},  \eqref{6-5} and convexity, we have that
\begin{equation}
	\label{eqn:15bound}
	0\leq f(\wt{\bu}) \leq 3d \cdot \frac1{20d}< \f15
	\quad\text{for all}\quad \wt{\bu}\in B(\0, 1)_{\R^{d-1}}.
\end{equation}
Since
$ \p f(\wt \u)=\f 1 {20d} \p \wt f \Bl(\f {\wt \u} {20d}\Br)-\f  1{20d} \wt f'(\0)$,
it follows from \eqref{6-5} that
\begin{align}
\sup_{\bg\in \p f(\wt \u)} \|\bg \|\leq \f 15 +\f 1{20}<\f13.\label{7-7}
\end{align}

 Now  consider the non-singular  linear mapping $\mathcal{T}_2:\R^d\to\R^d$,
\[ \left[\begin{matrix}
\u_x \\
u_y
\end{matrix} \right]:= \mathcal{T}_2 \left[\begin{matrix}
\z_x \\
z_y
\end{matrix} \right] :=\left[ \begin{matrix}
20 d  \z_x \\
z_y-\wt f'(\0) \cdot \z_x
\end{matrix} \right] =\left[ \begin{matrix}
20 d \boldsymbol{I}_{d-1} & 0\\
-\wt f'(\0)& 1
\end{matrix}\right] \left[\begin{matrix}
\z_x \\
z_y
\end{matrix} \right],\]
where $\z_x,\u_x\in\R^{d-1}$,  $z_y,u_y\in\R$ and $\boldsymbol{I}_{d-1}$ denotes the $(d-1)\times (d-1)$ identity matrix.  Let $\Og_2:=\mathcal{T}_2(\Og_1)$. Clearly,  $\Og_2$  is a convex domain in $\R^d$,
$f\bigl((\T_2  \z)_x\bigr)=\wt f ( \z_x)-\wt f'(\0)\cdot \z_x$, and $\T_2$ maps the hyperplane $\z_y=\wt f'(\0) \cdot \z_x$ in the $\z$-space, which is the supporting plane to the convex set $\Og_1$ at the origin, to the coordinate plane $\u_y=0$ in the $\u$-space. Moreover, using $\|\wt f'(\0)\|\le 2d$, a straightforward calculation (for the lower bound, one can consider two cases depending on how big is $|\xi_d|$) shows  that  $$ \f1{16}<  \|\mathcal{T}_2^\ast\bxi\|< 25 d,\   \  \forall \bxi\in\sph,$$
which, using \eqref{eqn:metric after affine transform}, also implies that
\begin{equation}\label{3-13-a}
\f14 \rho_{\Og_1} (\z,\z')\leq \rho_{\Og_2} (\mathcal{T}_2 \z, \mathcal{T}_2\z')\leq 5\sqrt{d} \rho_{\Og_1} (\z,\z')\   \   \ \text{for any $\z, \z'\in\Og_1$}.
\end{equation}
Define $G_2:=\mathcal{T}_2(G_1)$. Then it is easily seen that
\[ G_2=\Og_2 \cap \Bl( B(\0, 1)_{\R^{d-1}}\times \Bl[0, \f13\Br]\Br)=\Bl\{ (\bu_x, u_y)\in\R^d:\  \  \bu_x\in B(\0, 1)_{\R^{d-1}},\   \  f(\bu_x) \leq u_y \leq \f 13\Br\},\]
and
\begin{align}\label{eqn:g2rep-boundary}
\Bl\{ (\wt \u, f(\wt \u)):\  \ \|\wt \u\|\leq 1\Br\} =\mathcal{T}_2(G_1\cap \p \Og_1)=\mathcal{T}_2(G_1)\cap \mathcal{T}_2(\p \Og_1) =G_2\cap \p \Og_2.
\end{align}
Clearly, $\mathcal{T}_2\x=\x=(\0,\da)\in G_2$,  and $\mathcal{T}_2\y=(a,0, \dots, 0, b)\in G_2$  lies in the  $2$-dimensional plane spanned by $\bde_1$ and $\bde_d$.

Now we define  a new   metric $\wt\rho_{\Og_2}$   on the domain $\Og_2$ as follows.
  Given $\wt\u_0\in B(\0, 1)_{\R^{d-1}}$ and  $\bg\in\p f(\wt\u_0)$, let
\[ H_{\wt \u_0}^{\bg} (\wt \u):= f(\wt \u_0)+\bg\cdot (\wt \u-\wt \u_0),\   \   \wt\u \in\R^{d-1},\]
so that $u_y=H^{\bg} _{\wt \u_0}(\u_x)$ is the supporting hyperplane of $\Og_2$ at the point $(\wt\u_0, f(\wt \u_0))$ in the $\u$-space.
 For
$\u=(\u_x, u_y)\in\Og_2$ and $\w=(\w_x, w_y)\in\Og_2$, we define
\begin{align}
\wt \rho_{\Og_2}(\u, \w):= \|\u-\w\|+ \sup_{H_{\wt \u_0}^{\bg}} \Bl| \sqrt{u_y-H_{\wt \u_0}^{\bg} (\u_x)}-\sqrt{w_y-H_{\wt \u_0}^{\bg }(\w_x)}\Br|.\label{7-8}
\end{align}
where the supremum is taken over all functions $H_{\wt \u_0}^{\bg}$ with
 $\wt\u_0\in B(\0, 1)_{\R^{d-1}}$  and $\bg \in \p f(\wt\u_0)$.
Clearly, $\wt\rho_{\Og_2}$ is a metric on $\Og_2$.
We  claim that there exists a constant $C_d>0$ depending only on $d$ such that \begin{equation}
 \wt \rho_{\Og_2}(\u, \w)\leq  C_d\, \rho_{\Og_2}(\u, \w)\   \   \ \text{for any  $\u,\w\in\Og_2$}.\label{7-9}
\end{equation}

To see this,  $\wt\u_0\in B(\0, 1)_{\R^{d-1}}$ and  $\bg\in\p f(\wt\u_0)$,  let
$ \bN_{\wt \u_0}^{\bg} := \f { ( -\bg, 1)} {\sqrt{ 1+ \|\bg\|^2}}.$
Clearly,  $\bN_{\wt \u_0}^{\bg}$ is the inward normal vector to the boundary $\p \Og_2$ of $\Og_2$ at the point $(\wt\u_0, f(\wt \u_0))$.
 By convexity, we have
$$\Og_2\subset \Bl\{ \w\in\R^d:\  \  \w \cdot \bN_{\wt\u_0}^{\bg}\ge (\wt \u_0, f(\wt \u_0))\cdot \bN_{\wt\u_0}^{\bg}\Br\}, $$
which in particular implies that  $$a_{\bN_{\wt \u_0}^{\bg} }:=\min_{\w\in\Og_2} \w\cdot \bN^{\bg}_{\wt\u_0}=(\wt \u_0, f(\wt\u_0))\cdot \bN^{\bg}_{\wt \u_0}. $$
Thus, for any $\w=(\w_x,w_y)\in\Og_2$,
\begin{align*}
&\w\cdot \bN^{\bg}_{\u_0} -a_{\bN^{\bg}_{\u_0}}:=\Bl(\w-(\u_0, f(\u_0))\Br)\cdot \bN^{\bg}_{\u_0} =\Bl((\w_x,w_y)-\bigl(\w_x, H^{\bg} _{\wt \u_0} (\w_x)\bigr)\Br)\cdot \bN^{\bg}_{\wt \u_0}\\
&=\f { w_y-H^{\bg}_{\wt \u_0}(\w_x)}{\sqrt{ 1+ \|\bg\|^2}}.
\end{align*}
It then follows by  \eqref{7-7}  that for any $\w=(\w_x,w_y)\in\Og_2$ and $\u=(\u_x,u_y)\in\Og_2$,
\begin{align*}
\rho_{\Og_2}(\w,\u)&\ge \Bl| \sqrt{ \w\cdot \bN^{\bg}_{\wt \u_0} -a_{\bN^{\bg}_{\wt \u_0}}}-\sqrt{ \u\cdot \bN^{\bg}_{\wt \u_0} -a_{\bN^{\bg}_{\wt \u_0}}}\Br|=\f{\Bl| \sqrt{\w_y-H_{\wt \u_0}^{\bg}(\w_x)}-\sqrt{\u_y-H^{\bg}_{\wt \u_0}(\u_x)}\Br|}{ \Bl( 1+\|\bg\|^2\Br)^{\f14} }\\
&\ge \Bl( \f 9{10}\Br)^{\f14} \Bl| \sqrt{\w_y-H^{\bg}_{\wt \u_0}(\w_x)}-\sqrt{\u_y-H^{\bg}_{\wt \u_0}(\u_x)}\Br|.
\end{align*}
This together with \eqref{2-5} and \eqref{3-13-a}  implies the  claim \eqref{7-9}.

Finally, let $G$ denote  the planar section  of $G_2$  that passes though the origin and the points $\T_2\x=(\0, \da)$ and $\mathcal{T}_2 \y=(a,0, \dots, 0, b)$; namely, $G$ is the intersection of $G_2$ with the coordinate plane spanned by $\bde_1$ and $\bde_d$.  With  a slight abuse of notations, we identify a point $(z_1, 0,\dots, 0, z_d)\in\R^d$  with the the point   $(z_1, z_2)\in\RR^2$, and write $f(x):=f(x,0,\dots,0)$ for $x\in [-2,2]$.
Then  $f:[-2,2]\to \R$ is a convex and continuous function satisfying that $f(0)=0$, $f_+'(0)=0$ (due to the choice of $\wt f'(\0)$),
$0\leq f(x)\leq \f 15$ (see~\eqref{eqn:15bound})
and $|f_{\pm}'(x)|\leq \f13$ for every $x\in [-1,1]$ (see~\eqref{7-7}). Moreover,  $G$ is a convex domain in $\R^2$ that can be represented as (see~\eqref{eqn:g2rep-boundary})
\begin{equation}
G=\Bl\{ (x,y):\  \ -1\leq x\leq 1,    \   f(x) \leq y\leq \f13\Br\}.
\end{equation}

Now the metric $\wt \rho_G$ on the domain $G$ is defined as follows.  For
$(x_1, y_1), (x_2, y_2)\in G$,
\begin{align}
&\wt \rho_G((x_1, y_1), (x_2, y_2)):=\|(x_1-x_2, y_1-y_2)\|+\sup_{L}
\Bl| \sqrt{y_1-L (x_1)}-\sqrt{y_2-L(x_2)}\Br|,\label{4-13}
\end{align}
where $\|\cdot\|$ denotes the Euclidean norm, and  the supremum is taken over all linear  functions $L:\R\to\R$ of the form
\begin{equation}\label{4-14} L(x):=f(x_0) +a (x-x_0),\  \ x_0\in [-1,1]\   \ \text{and}\   \ a=f_{+}'(x_0)\  \ \text{or}\  \  a=f_{-}'(x_0).\end{equation}
Clearly,  for each linear function $L$ given in \eqref{4-14},  $y=L(x)$ is a supporting line of the convex  set  $G$ at the point $(x_0, f(x_0))$. By convexity, this in particular implies that the  metric $\wt \rho_G$ is well defined.  On the other hand, however,  each supporting line $y=L(x)$ of $G$  is  the intersection of a supporting hyperplane of the convex domain $G_2$  with   the  coordinate plane spanned by $\bde_1$ and $\bde_d$ (this readily follows from the classical results on separation of convex sets such as~\cite{Ro}*{Th.~B, p.~83} or~\cite{Sc}*{Th.~1.3.7, p.~12}); more precisely,  for each linear function $L$ given in \eqref{4-14},    we can find   $\bg\in\p f(\wt\u_0)$ with $\wt\u_0=(x_0, 0,\dots, 0)$ such that $L(x)= H_{\wt \u_0}^{\bg} (x,0,\dots, 0)$,   implying that  for each $(x, y)\in G$,
$$ \u_y -H_{\wt \u_0}^{\bg} (\u_x)=y-L(x)\   \   \ \text{ with $\u:=(x,0,\dots, 0, y)\in G_2$}.$$
It follows that  for any  $(x_1, y_1), (x_2, y_2)\in G$,
$$\wt \rho_G((x_1, y_1), (x_2, y_2))\leq \wt \rho_{\Og_2}\Bl( ( x_1,0,\dots, 0, y_1), ( x_2,0,\dots, 0, y_2)\Br).$$
In particular,  this implies that
 \[ \wt\rho_G\Bl( (0,\da), (a,b)\Br)\leq \f {C_d c_1}n.\]

Putting the above together, taking into account the fact that the space $\Pi_n^d$ is invariant under non-degenerate affine transforms,   we reduce  the proof of  the inequality  \eqref{3-2} to a two-dimensional problem, which is formulated explicitly as follows.
\begin{proposition}\label{prop-6}
		Let $G\subset \R^2$ be a convex domain given by
	\[ G:=\Bl\{ (x,y)\in\R^2:\  \ -1\leq x\leq 1,\  \ f(x)\leq y\leq \f 13\Br\},\]
	where  $f:[-2,2]\to \R$ is a continuous and convex function satisfying   $f(0)=f_+'(0)=0$, 
	 $0\leq f(x)\leq \f 15$   and   $|f_{\pm}'(x)|\leq \f13$    for all $x\in [-1,1]$.
	Let  $ \f {C_1^2} {n^2}\leq \da \leq \f 1{100}$.  Then there exists an absolute constant $C>1$ such that for any   $Q\in\Pi_n^2$  and  $(a,b)\in G$,
	\[ |Q(0,\da)-Q(a,b)|\leq C n\cdot \wt \rho_G\big((0,\da),(a,b)\big)\|Q\|_G,\]
where  $\wt \rho_G$ denotes the metric  given in  \eqref{4-13}.
\end{proposition}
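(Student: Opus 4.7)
My plan is to prove Proposition~\ref{prop-6} by decomposing the increment $Q(0,\da)-Q(a,b)$ along a short piecewise path in $G$ and applying the one-dimensional Bernstein inequality \eqref{1D-Bernstein} on each piece. Assume $\|Q\|_G\le 1$ and set $r:=\wt\rho_G((0,\da),(a,b))$. The Euclidean part of \eqref{4-13} gives $|a|,|b-\da|\le r$, and taking $L\equiv 0$ in \eqref{4-14} (a valid supporting line because $f(0)=f_+'(0)=0$) yields $|\sqrt{\da}-\sqrt{b}|\le r$. Since $\da\ge C_1^2/n^2$ and $r\le c_1/n$ with $c_1$ small, this forces $b$ and $\da$ to be comparable, which will be used throughout.

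Next, I use the geometric content of \cite{Pr}*{Lemma~2.4} to construct, inside $G$, a parallelogram $R$ containing both $(0,\da)$ and $(a,b)$ with sides parallel to two carefully chosen supporting lines of $\p G$. The parallelogram is to be spanned by two families of curves lying in $G$: straight segments parallel to one side (playing the role of tangential motion along $\p G$) and parabolic arcs of the form $y=L(x)+\al(x-x_0)^2$ (playing the role of motion normal to $\p G$). The construction is arranged so that the parameter interval on which each curve stays inside $G$ has length $\asymp 1$, while the ``tangential'' width of $R$ is $\asymp|a|$ and its ``square-root'' width matches $|\sqrt{\da}-\sqrt{b}|$.

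I then apply \eqref{1D-Bernstein} on each curve. For a straight segment of extended parameter length $\asymp 1$, the inequality gives $|(Q\circ\gamma)'|\le Cn$, so the contribution from tangential motion of length at most $r$ is $\le Cnr$. For a parabolic arc $\gamma(t)=(t,L(t)+\al(t-x_0)^2)$, the composition $Q\circ\gamma$ is a polynomial in $t$ of degree $2n$, and \eqref{1D-Bernstein} on the extended $t$-interval gives $|(Q\circ\gamma)'(t)|\le Cn/\sqrt{(t-t_-)(t_+-t)}$; integrating between the two endpoints produces an increment bounded by $Cn\Bl|\sqrt{y_1-L(x_1)}-\sqrt{y_2-L(x_2)}\Br|$, which is $\le Cnr$ by \eqref{4-13}. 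Summing the contributions of the (at most two) pieces of the path yields the desired bound.

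The main obstacle is the geometric construction in the second step: producing parabolic arcs that stay in $G$ with opening $\al$ exactly matched to the $\sqrt{\cdot}$-scale of $\wt\rho_G$, using only $|f_\pm'|\le 1/3$ and convexity of $f$ with no smoothness hypothesis. This is precisely the role of \cite{Pr}*{Lemma~2.4}, which supplies a parametrization of the convex boundary enabling the construction of such parabolic arcs uniformly in the shape of $f$, and makes the whole path-decomposition argument quantitative.
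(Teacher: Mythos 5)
Your overall architecture --- a parallelogram adapted to $\wt\rho_G$ built from a dominating parabola and supporting lines via the geometric lemma of \cite{Pr}, with the one--dimensional Bernstein inequality \eqref{1D-Bernstein} applied along a parabolic arc and a straight segment --- is the same as the paper's. However, you have the two families of curves playing the wrong roles, and this is not cosmetic. You propose to carry the \emph{tangential} displacement along a straight segment ``of extended parameter length $\asymp 1$'' so that Bernstein gives $|(Q\circ\gamma)'|\le Cn$. No such segment exists in general: the horizontal chord of $G$ at height $y\approx\delta$ has length comparable to that of $\{x:f(x)\le\delta\}$, which for $f(x)=\max(0,|x|-\delta)/5$ is only $O(\delta)$. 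Bernstein on that chord gives a derivative bound of order $n/\delta$, and even after using the supporting-line part of \eqref{4-13} to improve $|a|\le r$ to $|a|\lesssim r\sqrt{\delta}$, the tangential increment is of order $nr/\sqrt{\delta}$, which blows up as $\delta\downarrow C_1^2/n^2$. This is precisely why the paper routes the tangential motion along the parabola $y=\delta+kx^2$ (which stays in $G$ over $|x|\le 1/(2\sqrt{k})$, a full unit interval after the rescaling $t=2\sqrt{k}\,x$) and reserves the straight segment --- the \emph{vertical} one $\{a\}\times[f(a),\tfrac13]$ --- for the normal motion, where the Bernstein weight $\sqrt{(y-f(a))(\tfrac13-y)}\gtrsim\sqrt{\delta}$ on $[f(a)+\delta/4,\,2\delta]$ supplies the factor $\sqrt{\delta}$ that matches $|b-\wt p(a)|\lesssim \mu\sqrt{\delta}/n$. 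Conversely, your claim that Bernstein along a parabolic arc yields an increment $Cn\bigl|\sqrt{y_1-L(x_1)}-\sqrt{y_2-L(x_2)}\bigr|$ does not follow from \eqref{1D-Bernstein} as stated (the singularities of the Bernstein weight sit at the parameter endpoints of the arc, not at its point of tangency with $L$), and in any case $(a,b)$ need not lie on the parabolic arc through $(0,\delta)$, so such an arc cannot by itself realize the ``normal'' leg of the path.

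The quantitative heart of the argument is also left to a citation without being identified. What the geometric lemma must deliver is not merely ``a parametrization of the boundary'' but the specific pair consisting of a dominating parabola $f(x)\le\f{\delta}{2}+kx^2$ on $[-1,1]$ and a supporting line $\ell(x)=\alpha x-\beta$ satisfying $\sqrt{\delta+\beta}/|\alpha|<1/\sqrt{k}$, i.e.\ \eqref{8-4} of Lemma~\ref{lem-7}. It is exactly this inequality that converts the constraint $|\sqrt{y-\ell(x)}-\sqrt{\delta+\beta}|\le\mu/n$ coming from \eqref{4-13} into the width bound $|a|\le 5\mu/(n\sqrt{k})$ of \eqref{8-10}, so that the tangential displacement is $O(\mu/n)$ when measured in the parabola's own parameter $t=2\sqrt{k}\,x$; without it the ``parallelogram'' has uncontrolled width and the Bernstein step along the parabola yields nothing. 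You would also need the preliminary splittings (the trivial case $\max_{[-1,1]}f\le\delta/2$, and the separate treatment of $k<1$ versus $k\ge1$), but those are routine once the mechanism above is in place.
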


\subsection{Proof of the two-dimensional result}\label{subsec4-3}
This subsection is devoted to the proof of Proposition \ref{prop-6}.
 Assume that $ \f {C_1^2} {n^2}\leq \da \leq \f 1{100}$. Fix  $\x:=(0,\da)\in G$ and   $\y:=(a,b)\in G$  with $\wt \rho_G(\x,\y)\leq \f {c_1} n$. Then  $|a|\leq \|\x-\y\|\leq  \wt \rho_G(\x,\y) $, and
 \begin{align}\label{4-16}
 |\sqrt{b} -\sqrt{\da}|\leq \wt \rho_G(\x,\y) \leq \f {c_1}n \leq \f14 \sqrt{\da}.
 \end{align}
 In  particular, this implies  \begin{equation}\label{4-17}
 \   \ |b-\da|\leq \wt\rho_G(\x,\y) (\sqrt{b} +\sqrt{\da}) \leq 3\wt\rho_G(\x,\y) \sqrt{\da}.
 \end{equation}

  Let $Q\in\Pi_n^2$ be such that  $\|Q\|_G=1$.  Our goal is to show that \begin{equation}\label{4-15}
|Q(0,\da)-Q(a,b)|\leq C n\wt\rho_G(\x,\y).
\end{equation}

 For the proof of \eqref{4-15}, the non-trivial  case is when   $\max_{x\in [-1,1]} f(x) >\f \da 2$. In the case when     $\max_{x\in [-1,1]} f(x) \leq \f \da 2$,  \eqref{4-15}  can be deduced directly from  the one dimensional Bernstein inequality \eqref{1D-Bernstein}.  Indeed,   if  $\max_{x\in [-1,1]} f(x) \leq \f \da 2$, then
 \begin{align*}
I_0:=  \{ (x, \da):\  \ -1\leq x\leq 1\}\subset G\   \  \text{and}\  \ J_0:=\Bl\{ (a, y):\  \  \f \da 2  \leq y\leq \f13\Br\}\subset G.
 \end{align*}
  Applying   the Bernstein inequality \eqref{1D-Bernstein}  along  the horizontal line segment $I_0\subset G$ yields \begin{align*}
 |Q(0, \da)-Q(a,\da)|\leq |a| \max_{|x|\leq |a|} \Bl| \p_1 Q(x, \da)\Br| \leq \sqrt{2} n |a| \|Q\|_{I_0} \leq \sqrt{2} n \wt\rho_G (\x,\y),
 \end{align*}
 where the second step and the last step used the fact that $|a|\leq  \wt \rho_G(\x,\y)\leq \f12$.
 Similarly, applying  the Bernstein inequality \eqref{1D-Bernstein}  along  the vertical  line segment $J_0\subset G $, we obtain
 \begin{align*}
 &|Q(a, \da)-Q(a, b)|\leq   3\wt\rho_G(\x,\y) \sqrt{\da} \max_{y\in [ \f 9 {16} \da, \f {25}{16}\da]} \Bl| \p_2 Q(a, y)\Br|\\
 &\leq 48 n \wt\rho_G(\x,\y) \|Q\|_{J_0}\leq  48 n \wt\rho_G(\x,\y),
 \end{align*}
 where we used \eqref{4-16} and \eqref{4-17} in the first step.
 Thus, we have
\begin{align*}
|Q(\x)-Q(\y)|\leq |Q(0, \da)-Q(a,\da)|+|Q(a, \da)-Q(a, b)|\leq 50 n \wt\rho_G(\x,\y),
\end{align*}
proving  \eqref{4-15}  in the case when   $\max_{x\in [-1,1]} f(x) \leq \f \da 2$.

For the reminder of the proof, we always assume    $\max_{x\in [-1,1]} f(x) >\f \da 2$.
 We  need the following  geometric lemma, which is    Lemma~2.2 from~\cite{Pr} under  a slightly different assumption on the function $f$.

\begin{lemma}\label{lem-7} If $\max_{x\in [-1,1] } f(x) >\f \da 2$, then  we can find  a  positive constant $k$, a number $\xi\in[-1,1]\setminus\{0\}$, and a linear function $\ell(x):=\al x-\b$ with the following properties:
	\begin{align}
	&0<|\al|\leq \f13,\    0\leq \b\leq \f13,  \    \ell(\xi)= f(\xi),\    \ell'(\xi)= \ f_-'(\xi),\  \label{4-18}\\
	& \text{$f(x)\leq \f \da 2 +kx^2$ for all $x\in [-1,1]$},\label{4-19}\\
	&\f {\sqrt{\da+\b}}{|\al|}<\f 1 {\sqrt{k}}.\label{8-4}
	\end{align}
\end{lemma}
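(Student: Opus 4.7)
The plan is to take $\xi$ to be a point where the shifted parabola $y=\f\da 2+kx^2$ touches $f$ from above, for the smallest such $k$. Concretely, I would set
\[ k := \sup_{x\in[-1,1]\setminus\{0\}} \f{f(x)-\da/2}{x^2}. \]
Since $\max f>\da/2$ we have $k>0$, and since the quotient tends to $-\infty$ as $x\to 0$, the supremum is attained at some $\xi\in[-1,1]\setminus\{0\}$; the estimate~\eqref{4-19} is then immediate from the definition of $k$. With $\xi$ in hand, I would define $\al:=f'_-(\xi)$ and $\ell(x):=f(\xi)+\al(x-\xi)$, which has the required form $\al x-\b$ with $\b:=\al\xi-f(\xi)$ and automatically satisfies $\ell(\xi)=f(\xi)$ and $\ell'(\xi)=f'_-(\xi)$.

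For the remaining conditions in~\eqref{4-18}, observe that $\al$ is a left subgradient of $f$ at $\xi$, so $\ell\le f$ on $[-1,1]$; substituting $x=0$ yields $-\b\le f(0)=0$, hence $\b\ge 0$. The bound $|\al|\le\f13$ is inherited from the hypothesis on $|f'_\pm|$. To see that $\al\ne 0$, and more strongly the crucial inequality
\[ \al\xi \ge f(\xi) = \f\da 2 + k\xi^2, \]
I would use that for a convex function the one-sided derivatives dominate the relevant secant slopes: if $\xi>0$, then $f'_-(\xi)\ge f(\xi)/\xi>0$, while if $\xi<0$, then $f'_-(\xi)\le f'_+(\xi)\le f(\xi)/\xi<0$. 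In either case $\al$ has the same sign as $\xi$ and the displayed inequality holds. The bound $\b\le\f13$ then follows from $\b=\al\xi-f(\xi)\le\al\xi=|\al||\xi|\le\f13$.

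Finally I turn to~\eqref{8-4}. Substituting $\b=\al\xi-\f\da 2-k\xi^2$, the claim $\f{\sqrt{\da+\b}}{|\al|}<\f1{\sqrt{k}}$ is equivalent to $\al^2-k\al\xi+k^2\xi^2>\f{k\da}{2}$. Using $\al\xi-k\xi^2\ge\f\da 2$ together with $\al\xi\ge\f\da 2+k\xi^2$ from the previous paragraph, I factor
\[ \al^2-k\al\xi+k^2\xi^2 = \al(\al-k\xi) = \f{(\al\xi)(\al\xi-k\xi^2)}{\xi^2} \ge \f{(\da/2+k\xi^2)(\da/2)}{\xi^2} = \f{\da^2}{4\xi^2}+\f{k\da}{2} > \f{k\da}{2}. \]
The subtle point I anticipate is the uniform handling of both signs of $\xi$ in deducing $\al\xi\ge f(\xi)$, which rests on the monotonicity of secant slopes for convex $f$ and the one-sided nature of $f'_-$; once that inequality is in hand, the rest is a clean algebraic consequence of the tangent-parabola construction.
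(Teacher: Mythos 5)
Your construction coincides with the paper's: your $k=\sup_{x\ne 0}(f(x)-\da/2)/x^2$ is exactly the paper's infimum in \eqref{eqn:k def}, the touching point $\xi$ with $f(\xi)=\f\da2+k\xi^2$ is the same, and so are $\al=f_-'(\xi)$, $\ell$, $\b$, and the verification of \eqref{4-18}--\eqref{4-19} (the paper gets $\al\xi-k\xi^2\ge\f\da2$ from $\b\ge0$, you get the equivalent $\al\xi\ge f(\xi)$ from secant-slope monotonicity; both are fine, and your route has the advantage of giving $\al\ne0$ immediately). Where you genuinely diverge is \eqref{8-4}: the paper compares $\ell'(\xi)$ with $p'(\xi)=2k\xi$ at the touching point to obtain $0<2\xi/\al\le 1/k$ and then chains $\sqrt{\da+\b}/|\al|<\sqrt{2(\al\xi-k\xi^2)}/|\al|<\sqrt{2\al\xi}/|\al|=\sqrt{2\xi/\al}\le 1/\sqrt{k}$, whereas you square the target and deduce it by direct algebra from the single inequality $\al\xi\ge\f\da2+k\xi^2$; this is slightly more economical, as the tangent-versus-parabola slope comparison is never needed. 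One small slip: the first equality in your final display is false, since $\al(\al-k\xi)=\al^2-k\al\xi$ is missing the term $k^2\xi^2$; but because $k^2\xi^2>0$ you have $\al^2-k\al\xi+k^2\xi^2>\al(\al-k\xi)$, which is the direction you need, so replacing that ``$=$'' by ``$\ge$'' repairs the chain and the proof stands.
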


The proof of Lemma \ref{lem-7} is almost identical to that of  Lemma~2.2 from~\cite{Pr} (except  changes of some constants). For the sake of completeness, we include the proof below.
\begin{proof}
	
	Define
	\begin{equation}\label{eqn:k def}
	k:=\inf\left\{\tilde k\ge 0: \frac{\delta}{2}+\tilde kx^2\ge f(x)\   \text{for all}\   x\in[-1,1]\right\}.		
	\end{equation}
	Since $f(0)=0$ and $\max_{x\in [-1,1]} [f(x)-\wt k x^2]$ is a continuous function of $\wt k$,
	the infimum in \eqref{eqn:k def}  is well defined and  attained.  Since $\max_{x\in [-1,1] } f(x) >\f \da 2>f(0)=0$, it follows that $k>0$ and
	there  exists a number $\xi\in[-1,1]\setminus \{0\}$ such that
	 \begin{equation}\label{8-6}
	 \max_{x\in [-1,1]}(f(x)- kx^2)=f(\xi)- k\xi^2 =\f \da 2.
	\end{equation}
Next, define $\ell(x):=f(\xi) +\al  (x-\xi)=\al x-\beta$, where $\alpha=f'_-(\xi)$ and   \begin{equation}\label{4-23-0}
	\text{    $\beta=\al \xi -f(\xi)=\al \xi -k\xi^2-\frac\delta2$}.
	\end{equation}
Clearly,
	\begin{equation*}
\b< \al \xi  \leq  |\al|=|f_{-}'(\xi)|\leq \f13.
	\end{equation*}
	Since $f$ is convex and $f(0)=0$, we have $0\leq \b=f(0)-\ell(0)$,  which means \begin{equation}\label{8-5} 0< |\al|\leq \f13\  \ \text{and}\  \
	\frac\delta2\leq \al  \xi -k\xi^2.
	\end{equation}

	Finally,  define   $p(x):=\frac\delta2+kx^2$ for  $x\in\R$.   Using \eqref{8-6}, we have  that   $f(\xi)=\ell(\xi)=p(\xi)$, whereas  by \eqref{eqn:k def}, and convexity, we have  $\ell(x)\le f(x)\le p(x)$ for all  $-1\le x\le 1$.   If  $\xi>0$,  then  \begin{equation*}
	\al=\ell'(\xi)=\lim_{x\to \xi-} \f {\ell(x)-\ell(\xi)}{x-\xi}
	\ge  \lim_{x\to \xi-} \f {p(x)-p(\xi)}{x-\xi}  =p'(\xi)=2k\xi,
	\end{equation*}
and similarly, if  $\xi<0$, then
	\begin{equation*}
\al=\ell'(\xi)=\lim_{x\to \xi+} \f {\ell(x)-\ell(\xi)}{x-\xi}
\leq  \lim_{x\to \xi+} \f {p(x)-p(\xi)}{x-\xi}  =p'(\xi)=2k\xi.
\end{equation*}
(If $\xi\in (-1,1)$, we in fact have equality $2k\xi=\al$ here.)
In either case, we have
 \begin{equation}\label{8-9}
0<\f {2\xi}{\al} \leq \f 1{k}.
\end{equation}
We can then   establish~\eqref{8-4}  as follows:
	\begin{align*}
	\frac{\sqrt{\delta+\beta}}{|\alpha|}& = \frac{\sqrt{\al \xi -k\xi^2+\frac\delta2}}{|\al|}
	<  \frac{\sqrt{2(\al \xi -k\xi^2)}}{|\al|} < \frac{\sqrt{2\al \xi}}{|\al|} \\
	&= \sqrt{\frac{2\xi}{\al}} \le \frac{1}{\sqrt{k}},
	\end{align*}
	where we used \eqref{4-23-0} in the first step,  the inequality \eqref{8-5} in the second step,  and the inequality \eqref{8-9} in the last step.
		This completes the proof of the lemma.
\end{proof}

Now we return to the proof of  \eqref{4-15}, assuming   that  $\max_{x\in [-1,1]} f(x) > \f \da 2$.   Let $k>0$ and $\xi\in [-1,1]\setminus \{0\}$ be given as in  Lemma \ref{lem-7}; that is,   \eqref{4-18},  \eqref{4-19} and  \eqref{8-4} are satisfied. To achieve the required bound, we will apply the one-dimensional Bernstein inequality~\eqref{1D-Bernstein} along a vertical line segment and along an appropriate part of the parabola $y=\wt p(x)$. We consider the following two cases depending on the leading coefficient $k$ of the parabola. \\

{\bf Case 1.}\  \  $0<k< 1$.

\vspace{5mm}

%

This case is relatively easier to deal with.
Let $p(x):=\f \da 2 +k x^2$ and $\wt p(x)=\da +k x^2$.  By Lemma ~\ref{lem-7},  $f(x)\leq p(x)\leq \wt p(x)$ for all $x\in [-1,1]$.   Moreover,  if $|x|\leq \f 12$,
then
$\wt p(x)\leq \f 14 +\da <\f 13$. This means  that the parabola $$\Gamma_1:=\Bl\{ (x, \wt p(x)):\  \  |x|\leq \f 12 \Br\}$$
lies entirely in the domain $G$.
Now applying  the one dimensional  Bernstein inequality \eqref{1D-Bernstein} to  the univariate
polynomial $q(x):= Q(x, \wt p(x))\in\Pi_{2n}^1$, $x\in [-\f12, \f12]$,
and recalling $|a|\leq  \wt \rho_G(\x,\y) \le\f1{40} $,
we obtain
\begin{align}
|Q(0,\da) -Q(a, \wt p(a))| &=|q(0)-q(a)|\leq |a|\max_{|x|\leq a}|q'(x)| \leq 10 n |a| \|q\|_{[-\f12, \f 12]}\notag\\
& \leq 10n \wt \rho_G(\x,\y) \|Q\|_{\Gamma_1} \leq 10n \wt \rho_G(\x,\y).\label{4-27}
\end{align}

Next, we estimate the difference $|Q(a, b)-Q(a, \wt p(a))|$.
 We will apply   the one dimensional  Bernstein inequality \eqref{1D-Bernstein} along  the vertical line segment
\[ \Bl\{ (a, y):\  \ f(a)\leq y <\f 13\Br\}\subset G.\]
Since  $|a|\leq  \wt \rho_G(\x,\y) \leq \f {c_1} n$, we have  \[|\wt p(a)-\da|=k a^2 \leq (\wt \rho_G(\x,\y) )^2\leq \f {c_1^2}{n^2} \leq \f {\sqrt{\da}} {n}.\]
Furthermore, by
\eqref{4-17}, we have
\[ |b-\da|\leq 3\wt\rho_G(\x,\y) \sqrt{\da}\leq \f{3c_1} n \sqrt{\da}\leq \f {\sqrt{\da}} {n}.\]
This means that both $\wt p(a)$ and $b$ lie  in the interval $J_1:= [\da -\f {\sqrt{\da}}  n , \da+\f {\sqrt{\da}} n ]$, and moreover,
\[ |\wt p (a)-b|\leq |\wt p(a)-\da| +|b-\da| \leq (\wt \rho_G(\x,\y) )^2+3\wt\rho_G(\x,\y) \sqrt{\da}\leq 4\wt\rho_G(\x,\y) \sqrt{\da}.\]
It follows that
\begin{align*}
|Q(a, b)-Q(a, \wt p(a))|\leq |b-\wt p(a)| \max_{y\in  J_1} |\p_2 Q(a, y)|\leq 4\wt\rho_G(\x,\y) \sqrt{\da} \max_{y\in  J_1} |\p_2 Q(a, y)|.
\end{align*}
On the other hand, however,
\[ \da -\f {\sqrt{\da}} n -f(a) \ge \da -\f  {\sqrt{\da}} n -p(a) =\f \da 2 -\f { \sqrt{\da}}n -k a^2 \ge \f \da 3 -\Bl(\f {c_1} n\Br)^2 \ge \f \da 4,\]
where we used the fact that  $f(x)\leq p(x)$  for all $x\in [-1,1]$ in the first step,  and  the facts that $|a|\leq \f {c_1} n$ and $\da\ge \f {C_1^2}{n^2}$ in the last two steps.
This means that $f(a) \leq \f 34 \da$ and
\begin{equation}
 J_1 =\Bl[\da -\f {\sqrt{\da}}  n , \da+\f {\sqrt{\da}} n \Br] \subset \Bl[ f(a)+\f \da {4}, 2\da\Br] \subset \Bl[f(a), \f 13\Br].
\end{equation}
Thus, applying the one-dimensional Bernstein inequality \eqref{1D-Bernstein} to the polynomial $Q(a,\cdot)$ over the interval $[f(a), \f 13]$, we obtain
\begin{align*}
\sqrt{\da} \max_{y\in  J_1} |\p_2 Q(a, y)| \leq 6 n \max_{ y\in [f(a), \f 13]}|Q(a, y) |\leq 6n\|Q\|_G \leq 6n,
\end{align*}
implying
\begin{align}
|Q(a, b)-Q(a, \wt p(a))|\leq 4\wt\rho_G(\x,\y) \sqrt{\da} \max_{y\in  J_1} |\p_2 Q(a, y)| \leq 24 n\wt\rho_G(\x,\y).\label{4-29}
\end{align}
Now combining \eqref{4-27} with \eqref{4-29} yields the desired estimate \eqref{4-15}.
\vspace{3mm}

{\bf Case 2.}\  \  $k\ge 1$.

\vspace{3mm}

This is the more involved case.
For simplicity, we set  $\mu: =n\wt \rho(\x,\y)$, and
\[ B:=B_{\wt\rho}\Bl(\x, \f {\mu}n\Br):=\Bl\{ (x,y)\in G:\  \ \wt\rho_G\Bl( (0,\da), (x,y)\Br)\leq \f {\mu}n\Br\}.\]
Clearly,  $\mu\in (0, c_1]$ and  $\y\in B$.
Our aim is to show that \begin{equation}\label{4-30}
|Q(0,\da)-Q(a,b)|\leq C \mu
\end{equation}
for some absolute constant $C>0$.

Consider the   parallelogram
\[ E:=\Bl\{ (x,y)\in\R^2:\  \ y\ge 0, |\sqrt{y}-\sqrt{\da}|\leq \f {\mu} n,\  \ |\sqrt{y-\ell(x)}-\sqrt{\da+\b}|\leq \f {\mu}n\Br\},\]
where $\ell(x)=\al x-\b$ is the linear function from Lemma \ref{lem-7}.
From the definition of  $\wt\rho_G$, we have  $\x,\y\in B\subset E$.
 We further claim that  \begin{equation}\label{8-10}
E\subset R := \Bigg[ -\f {5\mu} {n\sqrt{k}}, \  \f {5\mu} {n\sqrt{k}}\Bigg]\times \Bigg[\da-\f {2\mu\sqrt{\da} }n, \  \da+\f {3\mu\sqrt{\da}}n\Bigg]\subset G.
\end{equation}
First, we prove  the relation $E\subset R$. We need to estimate the width and the height of the  parallelogram $E$.
 If  $(x,y)\in E$, then
$\sqrt{\da}-\f {\mu}n \leq \sqrt{y}\leq \sqrt{\da}+\f {\mu}n$, and
\[ \da-2\sqrt{\da}\f {\mu}n +\f {\mu^2}{n^2} \leq y \leq \da +2 \sqrt{\da} \f {\mu} n +\f {\mu^2}{n^2},\]
which, using  the assumptions that $\da\ge \f {C_1^2}{n^2}$ and $\mu\in (0, 2c_1)$, implies
 \begin{equation}\label{8-11}
y\in \Bigg[\da- \f { 2\sqrt{\da}\mu}n, \da+\f {3\sqrt{\da}\mu}n\Bigg].
\end{equation} This gives the desired estimate of   the height of the parallelogram $E$.

To estimate the width of $E$, we note that for  $(x, y)\in E$,
\[ \sqrt{\da+\b} -\f {\mu}n \leq \sqrt {y-\al x+\b} \leq \sqrt{\da+\b} +\f {\mu}n,\]
implying that
\[  \da -\f {2\mu}n \sqrt{\da+\b} +\f {\mu^2} {n^2} \leq y-\al x\leq \da+\f {2\mu}n \sqrt{\da+\b}+\f {\mu^2} {n^2}.\]
Since
\[ \f {\mu^2}{n^2}\leq \f {\mu \sqrt{\da}}n \leq \f {\mu} n \sqrt{\da+\b},\]
it follows that
\[ y- \da-\f {3\mu}n \sqrt{\da+\b}  \leq \al x\leq y- \da +\f {2\mu}n \sqrt{\da+\b},\]
which, using \eqref{8-11} and Lemma ~\ref{lem-7},  implies
\[ |x|\leq   \f {5\mu\sqrt{\da+\b} } {n|\al|} \leq \f {5\mu} {n\sqrt{k}}.\]
This bounds the width of $E$,  and hence completes the proof of the relation $E\subset R$.

To complete the proof of the claim \eqref{8-10}, it remains to prove the relation  $R\subset G$. Recall that $\mu\in (0, c_1]$. If $c_1>0$ is small enough, then
$\da +\f {3\mu \sqrt{\da}}n<\f13$  and $\f {5\mu}{n\sqrt{k}}\leq \f 12$, which means that
\begin{equation}\label{4-33}
R\subset \Bl\{ (x, y):\  \ |x|\leq \f {5\mu} {n\sqrt{k}}\leq \f12,\   \da-\f {2\mu\sqrt{\da}}n\leq y\leq \f13\Br\}.
\end{equation}
As   before, set $p(x):=\f \da 2+kx^2$. By Lemma ~\ref{lem-7}, we have $f(x)\leq p(x)$ for all $x\in [-1, 1]$. If, in addition,   $|x|\leq \f {5\mu}{n\sqrt{k}}$ and $c_1>0$ is small enough,  then
\[ f(x)\leq  p(x)\leq \f \da 2+ \f {25\mu^2}{n^2}\leq \da-\f {2\mu\sqrt{\da} }n.\]
Thus,  $R$ is a rectangle in $G$ that  lies above the parabola $\{(x, y):\  \ y=p(x),\  \  |x|\leq \f {5\mu}{n\sqrt{k}}\}$.
Using  \eqref{4-33}, we then obtain $R\subset G$, which  completes the proof of the claim \eqref{8-10}.


Now we turn to the proof of \eqref{4-30}.
With $\wt p(x):=\da+kx^2$, by Lemma ~\ref{lem-7},
if $|x|<\f 1 {2\sqrt{k}}$,  then \[
f(x)\leq p(x)<\wt p(x) \leq \da +\f14<\f13.\]
This means  that the parabola
\[\Gamma_2:=\Bl\{ (x,\wt p(x)):\  \ |x|\leq \f t {2\sqrt{k}}\Br\}\]
lies entirely in the domain $G$.
Now we  bound the term on the left hand side of \eqref{4-30}  as follows:
\begin{equation}\label{4-34}
|Q(0,\da)-Q(a,b)|\leq \Bl|Q(0,\da)-Q(a,\wt p(a))\Br|+\Bl|Q(a,\wt p(a))-Q(a,b)\Br|.
\end{equation}
These last two differences can be estimated by  applying the  one-dimensional Bernstein inequality \eqref{1D-Bernstein} along the parabola $y=\wt p(x)$ and along the vertical line $x=a$ respectively.

Indeed, to estimate $\Bl|Q(0,\da)-Q(a,\wt p(a))\Br|$, we define  the polynomial
$$q(t):=Q\Bl( \f {t}{2\sqrt{k}}, \wt p\Bl(\f {t}{2\sqrt{k}} \Br)\Br),\  \  t\in [-1,1],$$  which  is the restriction of $Q$ on $\Gamma_2$.
 Clearly,
$q\in\Pi_{2n}^1$ and  $\|q\|_{[-1,1]}=\|Q\|_{\Gamma_2}\leq \|Q\|_G=1$.
Recall that
\[ \y=(a,b)\in R = \Bigg[ -\f {5\mu} {n\sqrt{k}}, \  \f {5\mu} {n\sqrt{k}}\Bigg]\times \Bigg[\da-\f {2\mu\sqrt{\da} }n, \  \da+\f {3\mu\sqrt{\da}}n\Bigg]\subset G.\]
Thus, setting  $t_a:=2\sqrt{k}a$, we have
\[ |t_a|=2\sqrt{k}|a|\leq  \f {10 \mu} n <\f19.\]
It follows by the Bernstein inequality \eqref{1D-Bernstein}  that
\begin{align}\label{4-35}
\Bl| Q(0, \da)-Q(a, \wt p(a))\Br|&=\Bl| q(0)-q (t_a)\Br|\leq   \f {10\mu}{n}\|q'\|_{[-\f19, \f19]}\leq   40 \mu\|q\|_{[-1, 1]}
\leq 40 \mu.
\end{align}

It remains to estimate  the difference   $|Q(a,\wt p(a))-Q(a,b)|$.  Set
\[ J_n(\da):= \Big[\da-\f {2\mu\sqrt{\da} }n, \  \da+\f {3\mu\sqrt{\da}}n\Big].\] Clearly,  $b\in J_n(\da)$. Also, we have
\[ |\wt p(a)-\da| =ka^2 \leq \f {25 \mu^2} {n^2}\leq \f { \mu \sqrt{\da}} n,\]
implying   $\wt p(a) \in  J_n(\da)$. Thus,
\begin{align}
& \Bl|Q(a, \wt p(a)) -Q(a,b)\Br|\leq  \f {5\mu\sqrt{\da}} n  \max_{ u\in  J_n (\da)}  \Bl|\p_2 Q(a, u)\Br|.\label{4-36}
\end{align}
To estimate the term $\max_{ u\in  J_n (\da)}  \Bl|\p_2 Q(a, u)\Br|$, we will apply the Bernstein inequality \eqref{1D-Bernstein} to the restriction of $Q$ to the   vertical line segment
\[I(a):=\Bl\{(a,y):\  \ f(a)\leq y \leq \f 1 3\Br\}=\{a\}\times \Bl[f(a), \f13\Br].\]
By  \eqref{8-10}, we have  $$ \{a\}\times J_n(\da)\subset I(a)\subset G.$$    We  need to ensure that the left endpoint   of the interval $J_n(\da)$ is sufficiently ``far'' from the left endpoint $f(a)$  of the interval $[f(a), \f13]$. Indeed, recalling  that  $p(a)=\f\da 2 +ka^2\ge f(a)$, we have
\begin{align*}
\da-\f {2\mu} n \sqrt{\da}-f(a) \ge \da-\f {2\mu} n \sqrt{\da}-\Bl(\f \da 2+ka^2\Br) \ge \f \da 2 -\f {2\mu} n \sqrt{\da} -\f {25 \mu^2} {n^2} \ge \f \da {4},
\end{align*}
implying  that $f(a)<\f 34 \da$ and
\[ J_n(\da) \subset \wt {J}_n:=\Bl[f(a)+\f14 \da , 2\da\Br] \subset \Bl[f(a), \f13 \Br].\]
Thus, by \eqref{4-36} and the Bernstein inequality \eqref{1D-Bernstein}, we deduce
\begin{align}
& \Bl|Q(a, \wt p(a)) -Q(a,b)\Br|\leq  \f {4\mu\sqrt{\da}} n  \max_{ y\in \wt J_n }  \Bl|\p_2 Q(a, y)\Br|\notag\\
&\leq  40 \mu \max_{ y\in [f(a), 3^{-1}]  }  \Bl| Q(a, y)\Br|=40\mu\|Q\|_{I(a)}\leq 40 \mu.\label{4-37}
\end{align}
Now combining \eqref{4-34}, \eqref{4-35} with \eqref{4-37}, we obtain the desired estimate \eqref{4-30}.

\section{Doubling property}\label{sec5:lec}

Our main goal in this section is to prove \cref{thm:doubling}. The main idea is representing $\rho$-neighborhoods as the intersection of ``strips'' and tracking the size of each strip under a dilation. We need the following elementary lemma.
\begin{lemma}\label{lem-4-1} If  $s, t\ge 0$,  $h>0$ and $|\sqrt{s}-\sqrt{t}|\leq 2h$, then
	\begin{equation}\label{15}
	|\sqrt{s}-\sqrt{\wt{s}}|\leq h,\   \ \text{
		where $\wt s:= s+\frac 14 (t-s)$.}
	\end{equation}
\end{lemma}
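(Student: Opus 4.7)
The plan is to rewrite $\tilde{s} = s + \frac14(t-s) = \frac{3s+t}{4}$, note that $\tilde{s}\ge 0$ since $s,t\ge 0$, and then use the standard rationalization identity
\[
|\sqrt{s}-\sqrt{\tilde s}| \;=\; \frac{|s-\tilde s|}{\sqrt{s}+\sqrt{\tilde s}} \;=\; \frac{|t-s|}{4\bigl(\sqrt{s}+\sqrt{\tilde s}\bigr)}.
\]
Applying the same identity in the other direction gives $|t-s| = |\sqrt{t}-\sqrt{s}|(\sqrt{t}+\sqrt{s}) \le 2h(\sqrt{t}+\sqrt{s})$ by the hypothesis. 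Substituting yields
\[
|\sqrt{s}-\sqrt{\tilde s}| \;\le\; \frac{h(\sqrt{t}+\sqrt{s})}{2(\sqrt{s}+\sqrt{\tilde s})}.
\]

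It then suffices to check that the ratio $(\sqrt{t}+\sqrt{s})/\bigl(2(\sqrt{s}+\sqrt{\tilde s})\bigr) \le 1$, i.e.\ $\sqrt{t} \le \sqrt{s} + 2\sqrt{\tilde s}$. This is where the specific convex combination defining $\tilde s$ pays off: since $4\tilde s = 3s + t \ge t$, we have $2\sqrt{\tilde s} \ge \sqrt{t}$, and adding the nonnegative term $\sqrt{s}$ only helps. Combining gives $|\sqrt{s}-\sqrt{\tilde s}| \le h$.

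I do not anticipate any real obstacle here; the lemma is a purely algebraic one-variable statement, and the only subtlety is choosing the right grouping so that $\tilde s \ge t/4$ is visible and kills the $\sqrt{t}$ factor appearing in the numerator after rationalization. An alternative but essentially equivalent route would be to set $a=\sqrt{s}$, $b=\sqrt{t}$ and verify $|2a - \sqrt{3a^2+b^2}| \le |a-b|$ directly by squaring, but the rationalization approach above is cleaner and makes the role of the coefficient $\tfrac14$ transparent.
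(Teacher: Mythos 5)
Your proof is correct, and it takes a genuinely different route from the paper's. The paper argues by a two-case direct computation: when $\sqrt{s}\ge 2h$ it squares the hypothesis to trap $\frac14(t-s)$ between $h^2\pm h\sqrt{s}$ and then takes square roots; when $\sqrt{s}<2h$ it bounds $\tilde s$ between $\frac34 s$ and $(\sqrt{s}+h)^2$ and uses $(\frac{\sqrt3}{2}-1)\sqrt{s}\ge -(2-\sqrt3)h\ge -h$. Your rationalization argument avoids the case split entirely: writing $|\sqrt{s}-\sqrt{\tilde s}|=|t-s|/\bigl(4(\sqrt{s}+\sqrt{\tilde s})\bigr)$ and $|t-s|\le 2h(\sqrt{t}+\sqrt{s})$ reduces everything to $\sqrt{t}\le\sqrt{s}+2\sqrt{\tilde s}$, which follows from $4\tilde s=3s+t\ge t$. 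This is shorter and makes the role of the coefficient $\frac14$ transparent, as you note; the paper's version, while longer, needs no identity beyond expanding squares. The only pedantic point worth a half-sentence in a write-up is the degenerate case $s=t=0$ (where the rationalization denominator vanishes), but there $\tilde s=0$ and the claim is trivial.
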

\begin{proof}  If  $ \sqrt{s}\ge 2h$, then
	\[ s+4h^2-4h\sqrt{s}=(\sqrt{s}-2h)^2 \leq t \leq (\sqrt{s}+2h)^2=s+4h^2+4h\sqrt{s},\]
	and hence, we have
	$h^2-h\sqrt{s}\leq \frac 14 (t-s)\leq h^2+h\sqrt{s},$
	implying
	\begin{align*}
	\sqrt{s}-h\leq \sqrt{	s+h^2-h\sqrt{s}}\leq \sqrt{\wt s } \leq \sqrt{s+h^2+h\sqrt{s}} \leq \sqrt{s}+h.
	\end{align*}
If $0\leq \sqrt{s} <2h$, then
\[-s\leq t-s= (\sqrt{t}-\sqrt{s}) (\sqrt{t}+\sqrt{s})\leq 2h (2\sqrt{s}+2h)= 4h\sqrt{s} +4h^2,\]
which implies
	\[ \frac 34 s\leq  \wt s  \leq s+h\sqrt{s} +h^2\leq (\sqrt{s}+h)^2,\]
and hence
	\[ -h\leq  -(2-\sqrt{3})h\leq \Bl(\f {\sqrt{3}}2-1\Br) \sqrt{s}\leq \sqrt{\wt s} -\sqrt{s}\leq h.\]
	In either case, we prove   \eqref{15}.
\end{proof}

\begin{proof}[Proof of \cref{thm:doubling}]
	Given $\x\in\Og$, $\bxi\in\S^{d-1}$ and $h>0$,
	define a ``strip''
	\[ S(\Og, \x, \bxi, h):=\Bl\{\y\in\Og:\  \ \Bl|\sqrt{\x\cdot \bxi -a_\bxi}-\sqrt{\y\cdot \bxi-a_\bxi} \Br| \leq h\Br\}.\]
		Let $\phi(\z):=\x+4(\z-\x)$ denote the dilation of $\z$  with respect to the center $\x$ with ratio $4$. We claim that
	\begin{equation}\label{17}
	S(\Og,\x, \bxi,2h)\subset 	\phi\Bl( S(\Og, \x, \bxi, h)\Br),\   \ \x\in\Og,\   \ \bxi\in\S^{d-1},\  \ h>0.
	\end{equation}
	To see this, let  $\y\in S(\Og,\x, \bxi,2h)$,  and let $s:=\x\cdot\bxi-a_\bxi$ and $t:=\y\cdot \bxi-a_\bxi$. Then  $s, t\ge 0$,  $|\sqrt{s}-\sqrt{t}|\leq 2h$. Setting $\z:=\x+\f14(\y-\x)$, we have $\z\cdot \bxi-a_{\bxi}=s+\f14(t-s)$, and hence by  Lemma~\ref{lem-4-1},
	$$|\sqrt{\z\cdot \bxi-a_{\bxi}}-\sqrt{\x\cdot \bxi-a_{\bxi}}|\leq h.$$
	On the other hand, however,  by convexity, we have 	
	$\z\in [\y,\x]\subset \Og$. Thus,
	 $\z\in S(\Og, \x, \bxi, h)$.  The claim \eqref{17} then follows since $\y=\phi(\z)$.

Now using  \eqref{17},
	  we obtain
	\begin{align*}
	B_\rho (\x,2h)&=\bigcap_{\bxi\in\S^{d-1}} S(\Og, \x,\bxi,2h)\subset \bigcap_{\bxi\in\S^{d-1}}\phi\Bl( S(\Og, \x,\bxi,h)\Br)=\phi\Bl( \bigcap_{\bxi\in\S^{d-1}} S(\Og, \x,\bxi,h)\Br)\\
	&=\phi\Bl( B_\rho (\x, h)\Br),
	\end{align*}
	implying
	\begin{align*}
	\lambda_d(B_\rho(\x, 2h))\leq \lambda_d\left(\phi\left( B_\rho(\x,h)\right)\right)=\lambda_d(\x+4 (B_\rho(\x, h)-\x))=4^d \lambda_d(B_\rho(\x,h)).
	\end{align*}
\end{proof}

\section{Fast decreasing polynomials} \label{sec6:lec}

The main goal in this section is to prove  \cref{thm:polynomial construction}.
The required polynomial $P$ in  \cref{thm:polynomial construction}  will be  built as a product of polynomials from the following lemma.
\begin{lemma}\label{lem:resolve} Given  an integer $n\ge 2$ and any    $\x,\y\in\Omega$  with    $\rho(\x,\y)\ge \f {2\pi\sqrt{2d}} {n-1}$,   there exists a polynomial $P\in\P_{n}^d$ such that  $P(\y)=1$, $P(\x)=0$, and $0\le P(\z)\le 1$ for all $\z\in\Omega$.
\end{lemma}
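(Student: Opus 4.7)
The plan is to reduce to a one-variable problem along the direction realizing the Dubiner distance, and then to construct a univariate polynomial via a Chebyshev-type substitution.

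By compactness of $\sph$ and continuity of $\bxi\mapsto|\sqrt{\x\cdot\bxi-a_\bxi}-\sqrt{\y\cdot\bxi-a_\bxi}|$, pick $\bxi^*\in\sph$ realizing the maximum in the definition of $\rho(\x,\y)$. Set $u(\z):=\z\cdot\bxi^*$, $a:=a_{\bxi^*}$, $b:=\max_{\z\in\Omega}\z\cdot\bxi^*$. Since $\Omega$ is convex, $u(\Omega)=[a,b]$; since $\Omega\subset B(\0,d)$, $b-a\le 2d$. Writing $u_\x:=u(\x)$, $u_\y:=u(\y)$, the hypothesis becomes $|\sqrt{u_\y-a}-\sqrt{u_\x-a}|\ge 2\pi\sqrt{2d}/(n-1)$. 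It suffices to construct $p\in\P_n^1$ with $p(u_\y)=1$, $p(u_\x)=0$, and $0\le p\le 1$ on $[a,b]$: the polynomial $P(\z):=p(\z\cdot\bxi^*)$ then lies in $\P_n^d$ and satisfies all conclusions of the lemma, since $u(\Omega)\subseteq[a,b]$.

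To construct $p$, I would introduce the Chebyshev-type parametrization $u=a+(b-a)\cos^2(\theta/2)$ for $\theta\in[0,\pi]$, so that $\sqrt{u-a}=\sqrt{b-a}\cos(\theta/2)$ and $\cos^2(\theta/2)=(1+\cos\theta)/2$. Under this substitution, polynomials of degree $n$ in $u$ correspond bijectively to cosine polynomials (polynomials in $\cos\theta$) of degree $n$ in $\theta$. Letting $\theta_\x,\theta_\y\in[0,\pi]$ be the preimages of $u_\x,u_\y$, the hypothesis reads $\sqrt{b-a}\,|\cos(\theta_\y/2)-\cos(\theta_\x/2)|\ge 2\pi\sqrt{2d}/(n-1)$. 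Combining this with $\sqrt{b-a}\le\sqrt{2d}$ and $|\cos(\theta_\y/2)-\cos(\theta_\x/2)|\le|\theta_\y-\theta_\x|/2$ yields the angular separation $|\theta_\y-\theta_\x|\ge 4\pi/(n-1)$.

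The hardest step is then to exhibit a cosine polynomial $q$ of degree $n$ in $\theta$ with $q(\theta_\y)=1$, $q(\theta_\x)=0$, and $0\le q\le 1$ on $[0,\pi]$. A natural candidate is $q(\theta)=C^{-1}[\cos(m\theta_\x)-\cos(m\theta)]^{2}\cdot K(\theta;\theta_\y)$, where the first factor is a nonnegative cosine polynomial of degree $2m$ vanishing at $\theta_\x$, $K$ is a nonnegative cosine kernel of small degree peaked at $\pm\theta_\y$ (for instance a symmetrized Fej\'er-type kernel), $C$ is a normalizing constant so that $q(\theta_\y)=1$, and the degrees satisfy $2m+\deg K\le n$. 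The main obstacle is to choose $m$ and $K$ so that $\|q\|_{[0,\pi]}$ is attained exactly at $\theta_\y$: the separation $4\pi/(n-1)$ is precisely what guarantees the existence of an integer $m\lesssim n/4$ with $m(\theta_\y-\theta_\x)$ close to an odd multiple of $\pi$, so that $\cos(m\theta_\y)\approx-\cos(m\theta_\x)$ and hence the factor $[\cos(m\theta_\x)-\cos(m\theta)]^{2}$ is close to its sup-norm at $\theta_\y$; the auxiliary kernel $K$ then pins the peak precisely to $\theta_\y$ and absorbs any residual discrepancies. Verifying this bound globally on $[0,\pi]$ is the most delicate part of the argument, particularly when $\theta_\x$ and $\theta_\y$ are both close to an endpoint, where the $\cos\theta$-distance and the $\theta$-distance differ most severely.
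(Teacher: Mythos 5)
Your reduction to a univariate problem is sound and matches the paper's: you restrict to the extremal direction $\bxi^*$, use that $u(\Omega)=[a,b]$ with $b-a\le 2d$, and pass to the substitution $u=a+(b-a)\cos^2(\theta/2)$, which is exactly the paper's $\arccos$ parametrization; your angular separation $|\theta_\y-\theta_\x|\ge 4\pi/(n-1)$ is correct (you gain a factor $2$ over the paper because you use the exact maximizer rather than a $2$-approximate one). The gap is in the final, essential step: the construction of the univariate (cosine) polynomial. Your candidate $q(\theta)=C^{-1}[\cos(m\theta_\x)-\cos(m\theta)]^2K(\theta;\theta_\y)$ does not work as stated. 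The requirement $q(\theta_\y)=\|q\|_{[0,\pi]}$ forces $\theta_\y$ to be at least a critical point of $q$; but $\frac{d}{d\theta}[\cos(m\theta_\x)-\cos(m\theta)]^2\big|_{\theta=\theta_\y}=2m[\cos(m\theta_\x)-\cos(m\theta_\y)]\sin(m\theta_\y)$ is generically nonzero, while $K'(\theta_\y)=0$ at the kernel's peak, so $q'(\theta_\y)\ne 0$ in general and the maximum of $q$ cannot sit at $\theta_\y$. Making $m(\theta_\y-\theta_\x)$ only ``close to'' an odd multiple of $\pi$ cannot repair this, because the normalization must be exact, not approximate; and no choice of a nonnegative kernel $K$ peaked at $\theta_\y$ can move a non-critical point of the first factor to a global maximum of the product without a quantitative comparison that you do not supply. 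You flag this verification as ``the most delicate part,'' but it is in fact the entire content of the lemma, and the proposed candidate fails it.

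For contrast, the paper resolves exactly this difficulty with a different device: after writing $p(\x)=\cos\theta_1$, $p(\y)=\cos\theta_2$, it chooses $m$ with $\f{2\pi}{m}<\theta_1-\theta_2\le\f{2\pi}{m-1}$ and an index $\ell$ so that $[\cos\f{(\ell+1)\pi}{m},\cos\f{\ell\pi}{m}]\subset[p(\x),p(\y)]$, then composes with the affine map $\varphi$ sending $[p(\x),p(\y)]$ onto that subinterval (hence $\varphi([-1,1])\subset[-1,1]$), and takes $P=\f{1+(-1)^\ell T_m\circ\varphi\circ p}{2}$. This places the images of $\x$ and $\y$ exactly at two consecutive extrema of the Chebyshev polynomial $T_m$, so $P(\x)=0$ and $P(\y)=1$ hold exactly, and the global bound $0\le P\le 1$ is automatic from $|T_m|\le1$ on $[-1,1]$ — no peak-location analysis is needed. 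The angular separation is used only to guarantee $m\le n$. If you want to salvage your approach, you would need an analogous exact-placement mechanism; as written, the proof is incomplete.
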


\begin{proof} Fix   $\x,\y\in\Omega$  satisfying     $\rho(\x,\y)\ge \f {2\pi\sqrt{2d}} {n-1}$.
	Let $\bxi\in\sph$ be such that
	\begin{equation}\label{4-1}
	\rho(\x,\y)\leq 2|\sqrt{\x\cdot\bxi-a_\bxi}-\sqrt{\y\cdot \bxi-a_\bxi}|,
	\end{equation}
	where, as usual, $a_{\bxi}=\min_{\z\in\Og} \z\cdot \bxi$.
	We also define  $b_\bxi:=\max_{\z\in\Og}\z\cdot \bxi$.
Since $\Og\subset B(\0, d)$, we have
\begin{equation}\label{4-2-0}
	-d\leq a_\bxi<b_{\bxi}\leq d, \quad\text{so}\quad b_\bxi-a_\bxi\le2d.
\end{equation}
By symmetry, we may assume that $\x\cdot \bxi \leq \y\cdot \bxi$.
	
	Now we define
	\begin{equation}\label{4-3-0}p(\z):=\f 2 {b_\bxi-a_\bxi} \Bl( \z\cdot \bxi -\f {a_\bxi+b_\bxi}2\Br),\   \  \z\in\R^d.\end{equation}
	Since  $\x\cdot \bxi \leq \y\cdot \bxi$ and  $ \z\cdot\bxi\in [a_\bxi, b_{\bxi}]$ for every $\z\in \Og$,   we have that  $p(\Og)=[-1,1]$ and $-1\le p(\x)\leq p(\y)\le 1$.
	We further claim
	\begin{align}\label{5-3}
	|\arccos p(\x) -\arccos p(\y)| \ge \f 1 {\sqrt{2d}}  \rho(\x, \y),
	\end{align}
	which in particular implies that $p(\x)< p(\y)$.
	 Indeed, 
	 \begin{align*}
	 	|\arccos p(\x)-\arccos p(\y)| &=\int_{p(\x)}^{p(\y)} \f 1{\sqrt{1-t^2}}dt \\ &\ge \f 1 {\sqrt{2}} \int_{p(\x)}^{p(\y)} \f {1}{\sqrt{1+t}}dt \\
	 	&=2\f  {
	 		\sqrt{\y\cdot \bxi -a_\bxi} -\sqrt{ \x\cdot \bxi -a_\bxi}}{\sqrt{b_\bxi-a_{\bxi}}},
	 \end{align*}
 	which leads to the claimed~\eqref{5-3} directly by~\eqref{4-1} and~\eqref{4-2-0}.

	Finally, we construct  a polynomial $P\in\Pi_{n}^d$ with the stated properties. Let $0\leq \ta_2<\ta_1\leq \pi $ be such that  $p(\x)=\cos\ta_1$ and $p(\y)=\cos\ta_2$.
	Let $m\ge 3$  be the integer such that $\f {2\pi} m < \ta_1-\ta_2\leq \f {2\pi }{m-1}$, and let  $1\leq \ell \leq m$  be the   integer  satisfying
	$ \f {(\ell-1) \pi} m \leq \ta_2 < \f {\ell \pi}m$.
	Then  $  \cos \f {\ell\pi}m \leq p(\y)=\cos\ta_2\leq \cos \f {(\ell-1) \pi} m,$ and
	$ \ta_1 \ge \ta_2 +\f {2\pi} m  \ge \f {(\ell+1)\pi}m,$
	which, in particular,  implies that $1\leq \ell \leq m-1$ and
	\begin{equation}\label{5-6} -1\leq p(\x) \leq \cos \f {(\ell+1)\pi} m \leq \cos \f {\ell\pi}m\leq p(\y)\leq 1.\end{equation}
	We  consider the mapping $\varphi:[-1,1]\to \R$ given by
	\[ \varphi(t):= \f {p(\y)-t}{p(\y)-p(\x)} \cos \f {(\ell+1)\pi}m +\f { t-p(\x)}{p(\y)-p(\x)} \cos \f {\ell \pi} m,\  \ t\in [-1,1].\]
	It is easily seen that $\varphi$  maps the interval $ [p(\x), p(\y)]$  onto  the interval
	$\Bl[\cos \f {(\ell+1)\pi} m,   \cos \f {\ell\pi}m\Br]$, and      $\varphi([-1,1])\subset [-1,1].$
	Since $p(\Og)\subset [-1,1]$, 	we may define,  for each  $\z\in\Og$,
	$$u(\z):=\varphi(p(\z))=\f {p(\y)-p(\z)}{p(\y)-p(\x)} \cos \f {(\ell+1)\pi}m +\f { p(\z)-p(\x)}{p(\y)-p(\x)} \cos \f {\ell \pi} m.$$
	Clearly, $u\in \Pi_{1}^d$, 	
	\begin{equation}\label{5-7}
	u(\x)=\cos \f {(\ell+1)\pi}m<u(\y)=\cos \f {\ell \pi} m,
	\end{equation}
	and   $u(\Og)\subset [-1,1]$.

	Now  we define
	$$P(\z):= \f{1+ (-1)^{\ell} T_m (u(\z))}2,\   \ \z\in \Og,$$
	where $T_m(t):=\cos (m\arccos t)$,  $t\in [-1,1]$ denotes the $m$-th degree  Chebyshev polynomial of the first kind.
	Since $u\in\Pi_{1}^d$ and $u(\Og)\subset [-1,1]$, $P$ is a well-defined algebraic polynomial of degree at most $m$ on the domain $\Og$. Clearly,  $\|P\|_{\Og} \leq 1$, and by \eqref{5-7},
	$$ P(\x)=\f {1+(-1)^\ell T_m \Bl(\cos \f {(\ell+1)\pi}m\Br)} 2=0\   \   \ \text{
		and}\  \    P(\y)=\f {1+ (-1)^\ell T_m \Bl( \cos \f {\ell \pi}m\Br)}2 =1.$$

	It remains to verify that $m\leq n$ provided that $\rho(\x,\y)\ge \f {2\pi\sqrt{2d}} {n-1} $. Indeed,
	$$ \ta_1-\ta_2=|\arccos p(\x) -\arccos p(\y)|\leq \f {2\pi} {m-1},$$
	which,  using \eqref{5-3}, implies
	$$m\leq  \f {2\pi\sqrt{2d}}  {\rho(\x, \y)}+1\leq  n.$$

\end{proof}


\begin{proof}[Proof of \cref{thm:polynomial construction}]
	Let $\x\in \Omega$ and $n\in\mathbb{N}$. Let $\rho=\rho_{\Og}$ be the metric on $\Og$  defined in  \eqref{eqn:def-dubiner}. Note that  $\max_{\z,\z'\in\Og} \rho(\z,\z') \leq \max_{\z,\z'\in\Og}\sqrt{\|\z-\z'\|}\leq \sqrt{2d}$.
	Our goal is to construct a polynomial  $P\in\P_{n}^d$ such that   $P(\x)=1$ and
	\begin{equation}\label{eqn:fast decreasing-1}
	0\le P(\z)\le C \exp(-c\sqrt{ n\rho(\x,\z)}), \qtq{for any} \z\in\Omega.
	\end{equation}
	where  $C>1$ and $c\in (0, 1)$ are constants depending only on $d$.
	Such a  polynomial $P$   will be  built as a product of the polynomials from  \cref{lem:resolve}.

			By \cref{lem:resolve},   for each $\bog \in\Og\setminus\{\x\}$,  there exists an algebraic  polynomial $p_{\bog}$ in $d$-variables of total degree
	$$\deg (p_{\bog} )\leq \f {2\pi\sqrt{2d} } {\rho(\x, \bog)}+1\leq \f {11\sqrt{d} } {\rho(\x, \bog)}$$
	such that    $p_{\bog}(\x)=1$,  $p_{\bog}(\bog)=0$ and $0\leq p_{\bog}(\z)\leq 1$ for all $\z\in \Og$.   Let  $C_\ast=C_\ast(d)>1$ denote  the   constant given in \cref{thm:norm control}. Set   	
	$\al:=(22 \sqrt{d} C_\ast)^{-1}\in (0, \f12)$.   Then
	$  C_\ast \deg(p_{\bog})\cdot \big(\al  \rho(\x, \bog)\big)\leq \f12 $, and hence, using \cref{thm:norm control}, we conclude that
	\begin{align}\label{5-8}
	|p_{\bog}(\z) |=|p_{\bog}(\z)-p_{\bog}(\bog)|\leq \f 12 \   \ \text{whenever $\z\in\Og$ and  $\rho(\z, \bog) \leq \al  \rho(\x, \bog)$.}
	\end{align}

	Next, let $L>1$ be  a  large constant   depending  only on $d$, which  will be  specified later. Without loss of generality, we may assume that $n>L$, since otherwise the stated result with $P\equiv 1$ holds trivially.
	Set  $n_1:=n/L$, and consider a partition $\Og:=\bigcup_{j=0}^m\Og_j$ of $\Og$, given by
	\begin{align*}
	\Og_0:&=\{ \z\in\Og:\  \ \rho(\z, \x) \leq n_1^{-1}\}=B_\rho(\x, n_1^{-1}),\\
	\Og_j:&=\Bl\{\z\in\Og:\  \ 4^{j-1} n_1^{-1} < \rho(\z, \x) \leq 4^j n_1^{-1} \Br\}=B_\rho(\x, 4^j n_1^{-1})\setminus B_\rho(\x, 4^{j-1}n_1^{-1}),\\
	&j=1,2,\dots, m,
	\end{align*}
	where  $m\ge 1$ is the largest integer such that $4^{m-1}  \leq \sqrt{2d} n_1 $.
	For each integer $1\leq j\leq m$, let $\Ld_j$ be a maximal $\al 4^{j-1} n_1^{-1}$-separated subset of $\Og_j$ w.r.t. the metric $\rho$.  Then
	\begin{equation}\label{5-9}
	\Og_j\subset \bigcup_{\bog\in\Ld_j} B_\rho(\bog, \al 4^{j-1} n_1^{-1}),\   \ j=1,2,\dots, m.
	\end{equation}
	Moreover, since $\Og_j\subset B_\rho(\x, 4^j n_1^{-1})$,    by  the doubling property stated in  \cref{thm:doubling} and the standard volume comparison argument,  it follows that  $$\# \Ld_j\leq C_d:= \Bl( 2+\f 8 \al\Br)^{2d},\   \ 1\leq j\leq m.$$
	
	Now we define
	\begin{equation}\label{5-10}
	P(\z):=\prod_{j=1}^m \Bl( \prod_{\bog\in\Ld_j} p_{\bog}(\z)\Br)^{2^j},\   \ \z\in \Og.
	\end{equation}
	Clearly, $P$ is an algebraic polynomial in $d$ variables such that $P(\x)=1$, $0\leq P(\z)\leq 1$ for all $\z\in\Og$ and
	\begin{align*}
	\deg (P) &=\sum_{j=1}^m 2^j \sum_{\bog\in\Ld_j} \deg(p_{\bog})  \leq \sum_{j=1}^m  2^j \sum_{\bog\in\Ld_j} \f {11\sqrt{d}} {\rho(\x, \bog)}
	\leq 11\sqrt{d}\sum_{j=1}^m 2^j (4^{j-1} n_1^{-1})^{-1}\#\Ld_j \\
	&\leq \f {44\sqrt{d} C_d n}{L} \sum_{j=1}^\infty 2^{-j}=\f {44\sqrt{d} C_d n}{L}.
	\end{align*}
	Now specifying   $L= 44\sqrt{d} C_d$, we have $P\in\Pi_{n}^d$.

	To complete the proof, it suffices  to show that $P$ satisfies the condition  \eqref{eqn:fast decreasing-1}.
	 Since $\Og=\bigcup_{j=0}^m \Og_j$, we only need to  verify the estimate \eqref{eqn:fast decreasing-1} for each   $\z\in\Og_j$  and  $0\leq j\leq m$.
	Since $0\leq P(\z)\leq 1$ for every $\z\in\Og$, the estimate 	\eqref{eqn:fast decreasing-1} holds trivially with $C\ge \exp(c L^{-\f12}) $ and $c\in (0, 1)$ if $\z\in \Og_0=B_\rho(\x, n_1^{-1})$.
	
	Now assume that  $\z\in\Og_j$ for a fixed  $1\leq j\leq m$.
	By \eqref{5-9}, there exists $\bog_\z\in\Ld_j$ such that
	\[ \rho(\z, \bog_\z) \leq \al 4^{j-1} n_1^{-1} <\al \rho(\bog_\z, \x),\]
	which, using \eqref{5-8}, implies that $0\leq p_{\bog_\z}(\z)\leq \f12$.  Since $0\leq p_{\bog}(\z)\leq 1$ for every $\bog\in\Og\setminus\{\x\}$, we obtain from \eqref{5-10} that
	\begin{align*}
	0\leq P(\z) \leq \Bl( p_{\bog_\z} (\z) \Br)^{2^j} \leq 2^{-2^j}=e^{-2^j\log 2} \leq \exp \Bl( -c\sqrt{n \rho(\x,\z)}\Br),
	\end{align*}
	where  $c:=\f {\log 2} {\sqrt{ (1+\f \al 4) L}}$, and the last inequality holds because
	\begin{align*}
	n \rho(\x,\z)\leq n \Bl( \rho(\x, \bog_\z) +\rho(\z, \bog_\z)\Br) \leq n \Bl( 4^j n_1^{-1} + \al 4^{j-1} n_1^{-1}\Br) \leq c^{-2} (\log 2)^2 4^j.
	\end{align*}
	This  proves \eqref{eqn:fast decreasing-1} and hence completes the proof of \cref{thm:polynomial construction}.
\end{proof}

\section{Optimal meshes}\label{sec 7:lec}

This section is devoted to the proof of Theorem \ref{thm:optimal mesh}.
	Since optimal meshes  are invariant under nonsingular  affine transforms,  by  John's theorem on inscribed ellipsoid of the largest volume~\cite{Sc}*{Th.~10.12.2, p.~588}, without loss of generality, we may assume that $\Og$ is a convex domain  in $\R^d$ satisfying $B(\0,1)\subset  \Omega\subset B(\0,d)$.

	Let $\rho=\rho_{\Og}$ denote the metric on $\Og$ defined in \eqref{eqn:def-dubiner}.  Recall that
	$$B_\rho(\x, r):=\{\y\in\Og:\  \rho(\x,\y)\leq r\},\  \  \x\in\Og,\  r>0. $$
Throughout this section, we will use the letters $C_1, C_2,\dots$ to  denote large positive constants depending only on $d$, and letters $c_1, c_2,\dots$ to denote small positive constants depending only on $d$.

	 Using  Theorem \ref{thm:norm control}, we can find  a constant  $C_1>1$  such that  for any $Q\in\Pi_n^d$ and $\x,\y\in\Omega$,
	\begin{equation}\label{7-1}
	|Q(\x)-Q(\y)|\le  C_1 n\rho(\x,\y)\|Q\|_{\Og}.
	\end{equation}
Let $\da:=\f 1 {2C_1}$, and let  $\{\x_j\}_{j=1}^N$ be a maximal $\f \da n$-separated subset of $\Og$ w.r.t. the metric $\rho$; namely,  $\rho(\x_i, \x_j)\ge \f \da n$ for any $1\leq i\neq j\leq N$ and $\Og= \bigcup_{j=1}^N B_\rho(\x_j, \f \da n)$.  Let $\x^\ast\in\Og$ be such that $|Q(\x^\ast)|=\|Q\|_\Og$. Then there exists an integer  $1\leq j_0 \leq N$ such that
 $\x^\ast\in B_\rho(\x_{j_0}, \f \da n)$.
Using \eqref{7-1}, we obtain
\[ \|Q\|_{\Og} -|Q(\x_{j_0})|=|Q(\x^{\ast})|-|Q(\x_{j_0})|\leq C_1\da  \|Q\|_{\Og}=\f 12 \|Q\|_{\Og},\]
implying
\[ \|Q\|_{\Og}\leq 2 |Q(\x_{j_0})|\leq  2 \max_{1\leq j\leq N} |Q(\x_{j})|.\]

Recall that $\dim \Pi_n^d\sim n^d$, where the constants of equivalence depend  only on $d$.
Thus, to  complete the proof of Theorem \ref{thm:optimal mesh}, it is sufficient to show that there exists a positive integer $\ell$ depending only on $d$ such that
$N\leq \dim\Pi_{\ell^2 n}^d$.  Now let $\ell>1$ be  a fixed   large positive integer  depending only on $d$, which will be specified later.
Assume to the contrary that  $N>\dim\Pi_{\ell^2 n}^d$.
We will get  a contradiction as follows. 	By Theorem \ref{thm:polynomial construction}, for each $1\leq j\leq N$, we can find a polynomial $P_j\in\Pi_{\ell^2 n}^d$   such that $P_j(\x_j)=1$ and
	\begin{equation}\label{6-2-1}
	0\leq P_j(\x) \leq C_2 \exp \Bl( -c_2\sqrt{\ell^2 n \rho(\x, \x_j)}\Br),\   \ \forall \x\in\Og.
	\end{equation}
Since  $N>\dim\Pi_{\ell^2 n}^d$,  the polynomials $P_1, \dots, P_N$ are linearly dependent in the space $\Pi_{\ell^2 n}^d$, which means that
	\begin{equation}\label{7-2-0}
	\sum_{j=1}^N a_j P_j(\x)=0,\   \ \forall \x\in\Og,
	\end{equation}
	for some nonzero  vector  $(a_1, \dots, a_N)\in\RR^N\setminus \{\0\}$.
	Without loss of generality, we may assume that
	$1=a_1 =\max_{1\leq j \leq N} |a_j|,$
	and
	\begin{equation}\label{7-2}
	\rho(\x_1, \x_2)\leq \rho(\x_1, \x_3) \leq \rho(\x_1, \x_4)\leq \dots \leq \rho(\x_1, \x_N).
	\end{equation}
	Using  \eqref{7-2-0} and \eqref{6-2-1}, and setting   $t_j:=\rho(\x_j, \x_1)$ for $2\leq j\leq N$,  we obtain    \begin{align}\label{8-1-0}
	1=|P_1(\x_1)|=\Bl|\sum_{j=2}^N a_j P_j(\x_1)\Br|\leq \sum_{j=2}^N |P_j(\x_1)|\leq C_2\sum_{j=2}^N \exp\Bl( -c_2\sqrt{\ell^2 n t_j}\Br).
	\end{align}
	For each  fixed $2\leq j\leq N$, \eqref{7-2} implies that
	$ \{\x_1,\dots, \x_j\}$ is a $\f \da n$-separated subset of $ B_\rho (\x_1, t_j)$, and hence
	 the  balls
	$B_\rho(\x_i, \f \da {2n})$, $i=1,\dots, j$ are pairwise disjoint subsets of  $B_\rho (\x_1, 2t_j)$. It  then follows that
	\begin{equation}\label{7-4}
	\sum_{i=1}^j \lambda_d\left(B_\rho\Bl(\x_i, \f \da {2n}\Br)\right) \leq \lambda_d(B_\rho (\x_1, 2t_j)).
	\end{equation}
	  On the other hand,  however, since for each $1\leq i\leq j$,
	$B_\rho (\x_1, 2t_j)\subset B_\rho (\x_i, 3t_j)$, it follows  by the doubling property stated in  Theorem \ref{thm:doubling} that
	\[ \lambda_d(B_\rho (\x_1, 2t_j))\leq  (12 n t_j/\da)^{2d} \lambda_d\left(B_\rho\Bl(\x_i, \f \da {2n}\Br)\right)=C_3 (n t_j)^{2d}\lambda_d\left(B_\rho\Bl(\x_i, \f \da {2n}\Br)\right) ,\   \ i =1,\dots, j. \]
	This combined with \eqref{7-4} implies
	$j\leq C_4 (nt_j)^{2d}$ for each  $2\leq j\leq N$.
	Thus, using \eqref{8-1-0}, we obtain \begin{align*}
	1\leq &C_2 \sum_{j=2}^N \exp\Bl( -c_3 \ell (j/C_d)^{\f 1 {4d}}\Br)\leq C_5\int_1^\infty \exp\Bl( -c_4 \ell x^{\f1{4d}}\Br)\, dx \\
	&\leq  \f {4dC_5}{ (c_4\ell)^{4d} } \int_{0} ^\infty e^{- u} u^{4d-1}\, du\leq (c_6 \ell)^{-4d}.
	\end{align*}
	This is impossible   if $c_6\ell>1$.    Thus, taking $\ell$ to be the smallest integer $>\f 1 {c_6}$, we obtain
	$N\leq \dim\Pi_{\ell^2 n}^d$,
	 which is as desired.

{\bf Acknowledgment.} The authors thank the referees for the valuable comments and suggestions.

\begin{bibsection}
\begin{biblist}

	\bib{BBCL}{article}{
		author={Bloom, T.},
		author={Bos, L. P.},
		author={Calvi, J.-P.},
		author={Levenberg, N.},
		title={Polynomial interpolation and approximation in $\mathbb{C}^d$},
		journal={Ann. Polon. Math.},
		volume={106},
		date={2012},
		pages={53--81},
	}

\bib{ap1}{article}{
	author={Bos, L.},
	author={Calvi, J.-P.},
	author={Levenberg, N.},
	author={Sommariva, A.},
	author={Vianello, M.},
	title={Geometric weakly admissible meshes, discrete least squares
		approximations and approximate Fekete points},
	journal={Math. Comp.},
	volume={80},
	date={2011},
	number={275},
	pages={1623--1638},
}

\bib{Bo-Vi}{article}{
	author={Bos, Len},
	author={Vianello, Marco},
	title={Tchakaloff polynomial meshes},
	journal={Ann. Polon. Math.},
	volume={122},
	date={2019},
	number={3},
	pages={221--231},
}

\bib{BV}{book}{
	author={Boyd, Stephen},
	author={Vandenberghe, Lieven},
	title={Convex optimization},
	publisher={Cambridge University Press, Cambridge},
	date={2004},
}

\bib{DPTT}{article}{
	author={Dai, F.},
	author={Prymak, A.},
	author={Temlyakov, V. N.},
	author={Tikhonov, S. Yu.},
	title={Integral norm discretization and related problems},
	language={Russian, with Russian summary},
	journal={Uspekhi Mat. Nauk},
	volume={74},
	date={2019},
	number={4(448)},
	pages={3--58},
	translation={
		journal={Russian Math. Surveys},
		volume={74},
		date={2019},
		number={4},
		pages={579--630},
		issn={0036-0279},
	},
}

%

	\bib{ap2}{article}{
		author={De Marchi, Stefano},
		author={Marchioro, Martina},
		author={Sommariva, Alvise},
		title={Polynomial approximation and cubature at approximate Fekete and
			Leja points of the cylinder},
		journal={Appl. Math. Comput.},
		volume={218},
		date={2012},
		number={21},
		pages={10617--10629},
	}
	
		\bib{De-Lo}{book}{
		author={DeVore, Ronald A.},
		author={Lorentz, George G.},
		title={Constructive approximation},
		series={Grundlehren der Mathematischen Wissenschaften [Fundamental
			Principles of Mathematical Sciences]},
		volume={303},
		publisher={Springer-Verlag, Berlin},
		date={1993},
		pages={x+449},
	}
	
\bib{Du}{article}{
	author={Dubiner, Moshe},
	title={The theory of multi-dimensional polynomial approximation},
	journal={J. Anal. Math.},
	volume={67},
	date={1995},
	pages={39--116},
}

	\bib{IvTo}{article}{
		author={Ivanov, K. G.},
		author={Totik, V.},
		title={Fast decreasing polynomials},
		journal={Constr. Approx.},
		volume={6},
		date={1990},
		number={1},
		pages={1--20},
	}

\bib{He}{article}{
	author={Henk, Martin},
	title={L\"{o}wner-John ellipsoids},
	journal={Doc. Math.},
	date={2012},
	number={Extra vol.: Optimization stories},
	pages={95--106},
}

\bib{Kr11}{article}{
	author={Kro\'{o}, Andr\'{a}s},
	title={On optimal polynomial meshes},
	journal={J. Approx. Theory},
	volume={163},
	date={2011},
	number={9},
	pages={1107--1124},
}

\bib{Kr13}{article}{
	author={Kro\'{o}, Andr\'{a}s},
	title={Bernstein type inequalities on star-like domains in $\Bbb{R}^d$
		with application to norming sets},
	journal={Bull. Math. Sci.},
	volume={3},
	date={2013},
	number={3},
	pages={349--361},
}
	
	\bib{Kr}{article}{
		author={Kro{\'o}, Andr{\'a}s},
		title={Christoffel functions on convex and starlike domains in $\mathbb{R}^d$},
		journal={J. Math. Anal. Appl.},
		volume={421},
		date={2015},
		number={1},
		pages={718--729},
	}
	
%
%
%
%
	\bib{Kr16}{article}{
		author={Kro\'{o}, A.},
		title={Multivariate fast decreasing polynomials},
		journal={Acta Math. Hungar.},
		volume={149},
		date={2016},
		number={1},
		pages={101--119},
	}

	\bib{Kr19}{article}{
		author={Kro\'{o}, Andr\'{a}s},
		title={On the existence of optimal meshes in every convex domain on the
			plane},
		journal={J. Approx. Theory},
		volume={238},
		date={2019},
		pages={26--37},
	}

	\bib{ap3}{article}{
		author={Jetter, Kurt},
		author={St\"{o}ckler, Joachim},
		author={Ward, Joseph D.},
		title={Error estimates for scattered data interpolation on spheres},
		journal={Math. Comp.},
		volume={68},
		date={1999},
		number={226},
		pages={733--747},
	}

	\bib{MT2}{article}{
		author={Mastroianni, G.},
		author={Totik, V.},
		title={Weighted polynomial inequalities with doubling and $A_\infty$
			weights},
		journal={Constr. Approx.},
		volume={16},
		date={2000},
		number={1},
		pages={37--71},
	}	
	
	\bib{Pi}{article}{
		author={Piazzon, Federico},
		title={Optimal polynomial admissible meshes on some classes of compact
			subsets of $\Bbb{R}^d$},
		journal={J. Approx. Theory},
		volume={207},
		date={2016},
		pages={241--264},
	}

	\bib{Pr}{article}{
		author={Prymak, A.},
		title={Geometric computation of Christoffel functions on planar convex
			domains},
		journal={J. Approx. Theory},
		volume={268},
		date={2021},
		pages={Paper No. 105603, 13},
	}

\bib{Ro}{book}{
	author={Roberts, A. Wayne},
	author={Varberg, Dale E.},
	title={Convex functions},
	note={Pure and Applied Mathematics, Vol. 57},
	publisher={Academic Press [A subsidiary of Harcourt Brace Jovanovich,
		Publishers], New York-London},
	date={1973},
	pages={xx+300},
}

\bib{Sc}{book}{
	author={Schneider, Rolf},
	title={Convex bodies: the Brunn-Minkowski theory},
	series={Encyclopedia of Mathematics and its Applications},
	volume={151},
	edition={Second expanded edition},
	publisher={Cambridge University Press, Cambridge},
	date={2014},
	pages={xxii+736},
}

\end{biblist}
\end{bibsection}

\end{document}